\newcolumntype{P}[1]{>{\centering\arraybackslash}p{#1}}
\newcolumntype{M}[1]{>{\centering\arraybackslash}m{#1}}
\newtheorem{remark}{Remark}[section]
\newtheorem{example}[remark]{Example}
\newtheorem{definition}[remark]{Definition}
\newtheorem{lemma}[remark]{Lemma}
\newtheorem{theorem}[remark]{Theorem}
\newtheorem{problem}[remark]{Problem}
\newcommand{\h}[3]{h_{#1}(#2^{#1}, #3^{#1})}
\newcommand{\Hd}[2]{\mathcal{H}(#1, #2)}
\newcommand{\diam}{\mathrm{diam}}
\newcommand{\rad}{\mathrm{rad}}
\newcommand{\cen}{\mathrm{center}}
\newcommand{\e}{\mathrm{e}}
\newcommand{\presek}[3]{#2^{#1} \cap #3^{#1}}
\author{Aleksander Kelenc \thanks{Partially supported by the Slovenian Research Agency under the grants N1-0063, J1-1693 and J1-9109.}}
\title{Determining the Hausdorff Distance Between Trees in Polynomial Time}
\affiliation{
  Faculty of Electrical Engineering and Computer Science, University of Maribor, Slovenia\\
 Institute of Mathematics, Physics and Mechanics, Slovenia \\
 Center for Applied Mathematics and Theoretical Physics,  University of Maribor, Slovenia
 }
\keywords{ graph algorithms, trees, Hausdorff distance, graph similarity}
\begin{document}
\publicationdetails{23}{2021}{3}{3}{6952}
\maketitle

\begin{abstract}
The Hausdorff distance is a relatively new measure of similarity of graphs. 
The notion of the Hausdorff distance considers a special kind of a common subgraph of the compared graphs and depends on the structural properties outside of the common subgraph.
There was no known efficient algorithm for the problem of determining the Hausdorff distance between two trees, and in this paper we present a polynomial-time algorithm for it. The algorithm is recursive and it utilizes the divide and conquer technique. 
As a subtask it also uses the procedure that is based on the well known graph algorithm of finding the maximum bipartite matching.
\end{abstract}


\section{Introduction}
Comparing the structure of objects is a popular task in several scientific fields. The scientists want to know if the compared objects are identical or similar in some way.
For the study of similarity of molecular structures in chemistry many algorithmic approaches have been developed.
The so-called \emph{structure searching} mostly uses a graph isomorphism algorithm to determine whether two molecular compounds are identical; \emph{substructure searching} involves the subgraph isomorphism problem and involves determining whether any of the sample structures (usually saved in a database) contains a given structure.

Closely related to the topic of this paper is the problem known in chemistry as \emph{similarity searching}: given a molecule of interest find in a database its nearest neighbours - those molecules which are most similar to the given sample - using some measure of inter-molecular similarity \cite{downs-willett}.
To have a measure of similarity one has to model the compared objects with an appropriate tool.  Graphs are often used for this purpose.
Determining the distance between two graphs is related to the study of similarity of molecular structures \cite{willett}. 

A graph can be transformed into another one by a finite sequence of graph edit operations such as vertex insertion, vertex deletion, vertex substitution, edge insertion, edge deletion and edge substitution. Therefore, the distance between the two graphs can be defined by the shortest (or least-cost) edit operation sequence and it is called the \emph{graph edit distance} \cite{gao-xiao-tao-li}.
The graph edit distance is a general approach of inexact graph matching and by restricting to some special operations we get special measures. For example, 
assume that the compared graphs are of the same order and size, the possible operations defined are edge move \cite{Benade-Goddard-McKee-Winter}, edge rotation \cite{chartrand-saba-zou} and edge slide \cite{Benade-Goddard-McKee-Winter, johnson}. 

A graph $G$ is said to be a common subgraph of the graphs $G_1$ and $G_2$ if it holds that $H_1 \subseteq G_1$ and $H_2 \subseteq G_2$, where $H_1$ and $H_2$ are both isomorphic to $G$. We say that a common subgraph $G$ of $G_1$ and $G_2$ is a maximum common subgraph if there does not exist a common subgraph $H$ with $|V(H)|>|V(G)|$. 
The problem of determining maximum common subgraph is also a special case of graph edit distance computation. It was shown \cite{bunke} that under a particular cost function the graph edit distance computation is equivalent to the maximum common subgraph problem.

In \cite{bunke-shearer} the authors introduced a graph distance metric based on the maximum common subgraph. The metric they define uses only the order of a maximum common subgraph and the order of the graphs compared. 
A measure of similarity of graphs based on a maximum common subgraph is often used in chemical graph theory to search for molecules that are measured to be close to each other.
In \cite{duesbury-holliday-willet, Raymond-Willett} the authors described the maximum common subgraph algorithms and their applications to cheminformatics tasks. 

The Hausdorff distance of two graphs was introduced in \cite{banic-taranenko}.
The Hausdorff distance considers a special kind of a common subgraph of the compared graphs which depends on the structural properties outside of the common subgraph. 
The Hausdorff distance of graphs is more useful than the graph distance metric based on the maximum common subgraph when the measure of similarity of graphs has to be correlated with the distances from a subgraph (isomorphic to a common subgraph of the compared graphs) to the vertices that are outside of that subgraph. In the Example \ref{primerMotivacija} there are graphs $G_1$ and $G_2$ that have the same number of vertices in the maximum common subgraphs but different Hausdorff distances regarding to the graph $G$. 
\begin{example}\label{primerMotivacija}
Graphs $G_1$ and $G_2$ from Figure \ref{slikaMotivacija} are both subgraphs of graph $G$, therefore, there are six vertices in the maximum common subgraph of $G$ and $G_1$, and six vertices in the maximum common subgraph of $G$ and $G_2$. However, the Hausdorff distance of $G$ and $G_1$ is two and the Hausdorff distance of $G$ and $G_2$ is one. 
This means that graphs $G$ and $G_2$ are more similar than graphs $G$ and $G_1$ with respect to the Hausdorff distance of graphs.
\begin{figure}[h!]
  \begin{center}
    \begin{tikzpicture}[scale=1]
		\tikzstyle{rn}=[circle,fill=white,draw, inner sep=0pt, minimum size=5pt]
		\tikzstyle{every node}=[font=\footnotesize]

\node (1)[rn] at (-2.5 cm, 0.5 cm){};
\node (2)[rn] at (-2.5 cm, -0.5 cm){};
\node (3)[rn] at (-2 cm, 0 cm){};
\node (4)[rn] at (-1.5 cm, 0.5 cm){};
\node (5)[rn] at (-1.5 cm, -0.5 cm){};
\node (6)[rn] at (-1 cm, 0 cm){};
\node (7)[rn] at (0 cm, 0 cm){};
\node (8)[rn] at (1 cm, 0 cm){};
\node (9)[rn] at (2 cm, 0 cm){};
\node (10)[rn] at (3 cm, 0 cm){};

\node (100) at (0 cm, -1 cm){$G$};

\path (1) edge (3);
\path (2) edge (3);
\path (4) edge (3);
\path (5) edge (3);
\path (3) edge (6);
\path (6) edge (7);
\path (7) edge (8);
\path (8) edge (9);
\path (9) edge (10);

\node (11)[rn] at (-3.5 cm, -1.5 cm){};
\node (12)[rn] at (-3.5 cm, -2.5 cm){};
\node (13)[rn] at (-3 cm, -2 cm){};
\node (14)[rn] at (-2.5 cm, -1.5 cm){};
\node (15)[rn] at (-2.5 cm, -2.5 cm){};
\node (16)[rn] at (-2 cm, -2 cm){};

\path (11) edge (13);
\path (12) edge (13);
\path (14) edge (13);
\path (15) edge (13);
\path (13) edge (16);

\node (101) at (-3 cm, -3 cm){$G_1$};

\node (21)[rn] at (1.5 cm, -1.5 cm){};
\node (22)[rn] at (1.5 cm, -2.5 cm){};
\node (23)[rn] at (2 cm, -2 cm){};
\node (26)[rn] at (3 cm, -2 cm){};
\node (27)[rn] at (4 cm, -2 cm){};
\node (28)[rn] at (5 cm, -2 cm){};

\path (21) edge (23);
\path (22) edge (23);
\path (23) edge (26);
\path (26) edge (27);
\path (27) edge (28);

\node (101) at (3 cm, -3 cm){$G_2$};
		
	\end{tikzpicture}
    \caption{Graphs $G$, $G_1$ and $G_2$.}
    \label{slikaMotivacija}
  \end{center}
\end{figure}
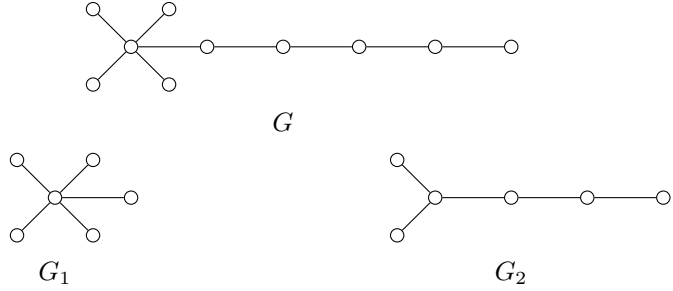

\end{example}
 
Authors of the paper \cite{kelenc-taranenko} have further studied the Hausdorff distance on common families of chemical graphs, namely paths, cycles and trees. They have presented an open problem of existence of a polynomial-time algorithm for the Hausdorff distance between two trees.

In this paper we give the answer to this open problem. We present a polynomial-time algorithm for the Hausdorff distance between two trees. The algorithm is based on the divide and conquer technique.
We proceed as follows. In the next section we state some basic definitions. Section \ref{sectionPreparation} deals with some known results that are used in the algorithm. In section \ref{sectionAlgorithm} we present the polynomial-time algorithm for Hausdorff distance between two trees and an example of how this algorithm works.


\section{Basic definitions and notations}

Let $G=(V(G),E(G))$ be a graph with the vertex set $V(G)$ and the edge set $E(G)$, where an edge is an unordered pair of vertices $\{u,v\}$ . A short notation $uv$ is used for an edge $\{u,v\}$. A vertex $u$ is {\em adjacent} to a vertex $v$ if $uv \in E(G)$. A vertex $u$ is {\em incident} to an edge $e$ if it is an endpoint of the edge $e$.

Let $G=(V(G),E(G))$ and $H=(V(H),E(H))$ be arbitrary graphs. Graph $H$ is a subgraph of $G$  $(H\subseteq G)$ if $V(H)\subseteq V(G)$ and $E(H)\subseteq E(G)$.

All graphs considered in the paper are simple graphs, i.e.\ the are no multiple edges and no loops ($uu\not \in E(G)$ for any $u\in V(G)$).

Let $G$ be a graph and let $S \subseteq V(G)$. By $\langle S \rangle$ we denote the subgraph of $G$ induced by the set $S$, i.e.\ for all $u,v\in S$, $uv\in E(\langle S \rangle)$ if and only if $uv\in E(G)$. 

Two graphs are {\em isomorphic}, if there is a bijective correspondence between their vertex sets which preserves adjacency and non-adjacency of the vertices. 

A path $P$ from a vertex $x$ to a vertex $y$ in a graph $G$ is a sequence
$x=v_0 v_1 v_2 \dots v_{k-1} v_k=y$ of pairwise different vertices of $G$, where $v_i v_{i+1}$ is an edge of G, for each $i \in \{0, \ldots, k-1\}$. The vertices $x$ and $y$ are called the {\em endpoints} of the path. The {\em length} of a path $P$, denoted by $\ell (P)$, is the number of edges in $P$. 
If we add the edge $xy$ to the path, then we get a \emph{cycle}.

The {\em distance} between vertices $x$ and $y$ is the length of a shortest path between $x$ and $y$ in $G$ and is denoted by $d_G(x,y)$.
A graph $G$ is {\em connected} if for each pair of vertices $u,v\in V(G)$ there is a path in $G$ from $u$ to $v$.
A connected subgraph $H$ of a graph $G$ is {\em convex} in $G$ if for any pair of vertices $u,v\in V(H)$, any shortest path $P$ from $u$ to $v$ in graph $G$ lies entirely in $H$ ($P\subseteq H$).

A graph $T=(V(T),E(T))$ is a \emph{tree} if it is connected and has no cycles.
A tree $T=(V(T),E(T))$ is \emph{rooted} if there is a distinguished vertex $r \in T(G)$ called the \emph{root} of the tree.
Note, there is a unique path from the root to any other vertex $v \in V(T)$. The root is at the top and the other vertices can be partitioned in the levels according to their distance to the root of the tree.
The \emph{depth} of vertex $v \in V(T)$, denoted by $depth[v]$, is the length of the path from the root node to the vertex $v$. The depth of $T$ is a maximum depth among the all vertices.
Vertex $v \in V(T)$ is called \emph{ancestor} of vertex $u \in V(T)$ if vertex $v$ lies on the unique path from $u$ to the root and $u \neq v$. Vertex $v \in V(T)$ is called \emph{descendant} of vertex $u \in V(T)$ if vertex $u$ lies on the unique path from $v$ to the root and $u \neq v$. The set of all ancestors (descendants) of a vertex $v$ is denoted by $ancestors[v]$ ($descendants[v]$), respectively.
Vertex $v \in V(T)$ is called the \emph{parent} of node $u \in V(T)$, denoted by $parent[u]$, if $vu \in E(T)$ and $v$ is ancestor of $u$. The vertex $u$ is then called a \emph{child} of vertex $v$. The \emph{children} of a vertex $v$ is the set $children[v] = \{u \in V(T) \ | \ u \text{ is a child of }v\}$.
A vertex with no children is called a $leaf$. Non-root vertices $v,u \in V(T)$ are \emph{siblings} if $parent[v]=parent[u]$.
The \emph{height} of a vertex $v \in (V(T))$, denoted by $height[v]$, is the length of a longest path from the vertex $v$ to any other vertex in the vertex set $\{v\} \cup descendants[v]$.
\begin{example}
In Figure \ref{primerDefinicije} there is a rooted tree $T$ with the root vertex $v_{10}$. Tree $T$ is drawn twice. On the left side, $T$ is drawn with regard to the depth of the vertices, and on the right side, $T$ is drawn with regard to the height of the vertices.

\begin{figure}[h!]
  \begin{center}
    \begin{tikzpicture}[scale=1]
		\tikzstyle{rn}=[circle,fill=white,draw, inner sep=0pt, minimum size=5pt]
		\tikzstyle{every node}=[font=\footnotesize]

\node (2)[rn, label={[label distance=1](135:$v_{1}$}] at (-4 cm, 1 cm){};
\node (3)[rn, label={[label distance=1](-90:$v_{2}$}] at (-3.5 cm, 0 cm){};
\node (4)[rn, label={[label distance=1](-90:$v_{3}$}] at (-2.5 cm, 0 cm){};
\node (5)[rn, label={[label distance=1](45:$v_{4}$}] at (-3 cm, 1 cm){};
\node (6)[rn, label={[label distance=1](135:$v_{5}$}] at (-3.5 cm, 2 cm){};
\node (7)[rn, label={[label distance=1](-90:$v_{6}$}] at (-1.75 cm, 0 cm){};
\node (8)[rn, label={[label distance=1](45:$v_{7}$}] at (-1.75 cm, 1 cm){};
\node (9)[rn, label={[label distance=1](135:$v_{8}$}] at (-1.75 cm, 2 cm){};
\node (10)[rn, label={[label distance=1](90:$v_{9}$}] at (-0.5 cm, 2 cm){};
\node (11)[rn, label={[label distance=1](90:$v_{10}$}] at (-2 cm, 3 cm){};

\path (2) edge node {} (6);
\path (3) edge node {} (5);
\path (4) edge node {} (5);
\path (5) edge node {} (6);
\path (6) edge node {} (11);
\path (7) edge node {} (8);
\path (8) edge node {} (9);
\path (9) edge node {} (11);
\path (10) edge node {} (11);

\node (00)[label={[label distance=0](180: }] at (-6 cm, 3 cm){0};
\node (01)[label={[label distance=0](180: }] at (-6 cm, 2 cm){1};
\node (02)[label={[label distance=0](180: }] at (-6 cm, 1 cm){2};
\node (03)[label={[label distance=0](180: }] at (-6 cm, 0 cm){3};

\path (00) [dotted] edge node {} (11);
\path (01) [dotted] edge node {} (6);
\path (02) [dotted] edge node {} (2);
\path (03) [dotted] edge node {} (3);

\path (3) [dotted] edge node {} (4);
\path (4) [dotted] edge node {} (7);
\path (2) [dotted] edge node {} (5);
\path (5) [dotted] edge node {} (8);
\path (6) [dotted] edge node {} (9);
\path (9) [dotted] edge node {} (10);

//height
\node (2)[rn, label={[label distance=1](135:$v_{1}$}] at (2 cm, 0 cm){};
\node (3)[rn, label={[label distance=1](-90:$v_{2}$}] at (2.5 cm, 0 cm){};
\node (4)[rn, label={[label distance=1](-90:$v_{3}$}] at (3.5 cm, 0 cm){};
\node (5)[rn, label={[label distance=1](45:$v_{4}$}] at (3 cm, 1 cm){};
\node (6)[rn, label={[label distance=1](135:$v_{5}$}] at (2.5 cm, 2 cm){};
\node (7)[rn, label={[label distance=1](-90:$v_{6}$}] at (4.25 cm, 0 cm){};
\node (8)[rn, label={[label distance=1](45:$v_{7}$}] at (4.25 cm, 1 cm){};
\node (9)[rn, label={[label distance=1](135:$v_{8}$}] at (4.25 cm, 2 cm){};
\node (10)[rn, label={[label distance=1](90:$v_{9}$}] at (6 cm, 0 cm){};
\node (11)[rn, label={[label distance=1](90:$v_{10}$}] at (4 cm, 3 cm){};

\path (2) edge node {} (6);
\path (3) edge node {} (5);
\path (4) edge node {} (5);
\path (5) edge node {} (6);
\path (6) edge node {} (11);
\path (7) edge node {} (8);
\path (8) edge node {} (9);
\path (9) edge node {} (11);
\path (10) edge node {} (11);

\node (00)[label={[label distance=0](180: }] at (7.5 cm, 3 cm){3};
\node (01)[label={[label distance=0](180: }] at (7.5 cm, 2 cm){2};
\node (02)[label={[label distance=0](180: }] at (7.5 cm, 1 cm){1};
\node (03)[label={[label distance=0](180: }] at (7.5 cm, 0 cm){0};

\path (00) [dotted] edge node {} (11);
\path (01) [dotted] edge node {} (9);
\path (02) [dotted] edge node {} (8);
\path (03) [dotted] edge node {} (10);

\path (3) [dotted] edge node {} (4);
\path (4) [dotted] edge node {} (7);
\path (2) [dotted] edge node {} (3);
\path (5) [dotted] edge node {} (8);
\path (6) [dotted] edge node {} (9);
\path (7) [dotted] edge node {} (10);
		
	\end{tikzpicture}
    \caption{A rooted tree $T$ drawn with regard to the depth (left hand-side) and to the height (right hand-side) of vertices.}
    \label{primerDefinicije}
  \end{center}
\end{figure}
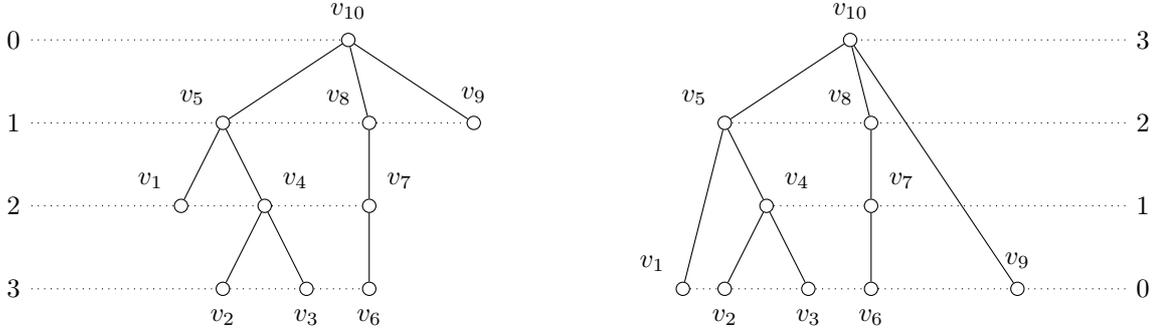
\end{example}

Let $G$ be a graph and $v$ be a vertex of G. The \emph{eccentricity} of the vertex $v$, denoted $\e(v)$ is the maximum distance from $v$ to any vertex of $V(G)$. That is, $\e(v)=\max\{d_G(v,u) \ | \ u \in V(G)\}$.
The \emph{radius} of the graph $G$, denoted $\rad(G)$, is the minimum eccentricity among the vertices of $G$, i.e. $\rad(G)=\min\{\e(v) \ | \ v \in V(G)\}$.
The \emph{diameter} of $G$, denoted $\diam(G)$, is the maximum eccentricity among the vertices of $G$, i.e. $\diam(G)=\max\{\e(v) \ | \ v \in V(G)\}$.
The \emph{center} of $G$ is the set of vertices with minimum eccentricity, i.e. $\cen(G)=\{v \in V(G) \ | \ \e(v)=\rad(G)\}$. A vertex $v\in \cen(G)$ is called a \emph{central vertex} of $G$. For an arbitrary graph $G$ it holds that $\rad(G)\leq \diam(G) \leq 2\cdot \rad(G)$.

A graph $G=(V(G),E(G))$ is \emph{bipartite} if the set of vertices $V(G)$ can be partitioned into two sets $A$ and $B$ such that any edge from $E(G)$ has one endpoint in the set $A$ and the other in the set $B$.
A \emph{matching} $M \subseteq E(G)$ is a collection of edges such that every vertex of $V(G)$ is incident to at most one edge of $M$. A vertex is \emph{matched} if it is an endpoint of an edge from the set $M$.
A \emph{maximum matching} is a matching that contains the largest possible number of edges.
A matching is called \emph{perfect} or \emph{1-factor} if every vertex of a graph $G$ is matched.

To introduce the Hausdorff distance in graphs we will need the following definitions.

\begin{definition}
Let $H_1$ be a convex subgraph of $G_1$ and $H_2$ a convex subgraph of $G_2$. If $H_1$ and $H_2$ are isomorphic graphs, then an amalgam of $G_1$ and $G_2$ is any graph $A$ obtained from $G_1$ and $G_2$ by identifying their subgraphs $H_1$ and $H_2$. We call the isomorphic copies of $G_1$ and $G_2$ in $A$ the covers of the amalgam $A$ and denote them by $G_1^A$ and $G_2^A$, respectively. See Figure \ref{zlepek} for reference.
\end{definition}

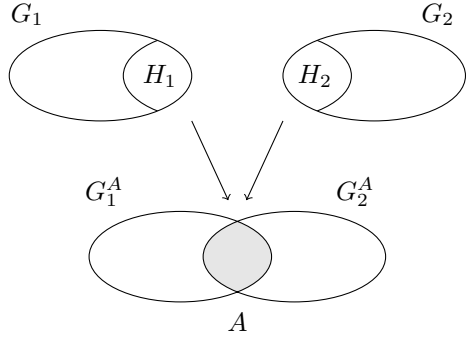
\begin{figure}[h!]
  \begin{center}
    \begin{tikzpicture}[scale=0.6]
		\tikzstyle{rn}=[circle,fill=white,draw, inner sep=0pt, minimum size=5pt]
		\tikzstyle{every node}=[font=\footnotesize]

		\draw (0,6) ellipse (2 and 1);			  
		\draw (6,6) ellipse (2 and 1);
		
		\begin{scope}
			\clip(0,6) ellipse (2 and 1);
			\draw (2.5,6) ellipse (2 and 1);
		\end{scope}
		
		\begin{scope}
			\clip (6,6) ellipse (2 and 1);
			\draw (3.5,6) ellipse (2 and 1);
		\end{scope}	
		
		\node [above left] at($(0,6)+(120:2 and 1)$) {$G_1$};
		\node [above left] at($(7,6)+(60:2 and 1)$) {$G_2$};
		\node at(1.3,6) {$H_1$};
		\node at(4.7,6) {$H_2$};

	  	\def\firstellipse{(1.75,2) ellipse (2 and 1)}
	    \def\secondellipse{(4.25,2) ellipse (2 and 1)}
	    
   		\begin{scope}
        	\clip \firstellipse;
        	\fill[black!10] \secondellipse;
    	\end{scope}

    	\draw \firstellipse \secondellipse;
	    
		\node [above left] at($(1.75,2)+(120:2 and 1)$) {$G_1^A$};
		\node [above left] at($(5.25,2)+(60:2 and 1)$) {$G_2^A$};

		\node [below] at(3,1) {$A$};
		
		\draw [->] (2,5) -- (2.8,3.25);
		\draw [->] (4,5) -- (3.2,3.25);
	\end{tikzpicture}
	
    \caption{An amalgam $A$ of $G_1$ and $G_2$.}
    \label{zlepek}
  \end{center}
\end{figure} 

We denote the set of all amalgams of the graphs $G_1$ and $G_2$ by $\mathcal{X}(G_1, G_2)$.

\begin{remark}
Let $A$ be an amalgam of $G_1$ and $G_2$ obtained from $G_1$ and $G_2$ by identifying their convex subgraphs $H_1$ and $H_2$. Then $G_1^A \cap G_2^A = H_1^A = H_2^A$ is isomorphic to $H_1$ and $H_2$.
\end{remark}

Let $\mathcal{G}$ be the family of all simple connected graphs.

\begin{definition}\label{distanceBetweenCovers}
Let $G_1, G_2 \in \mathcal{G}$. Let $A$ be an amalgam of $G_1$ and $G_2$. Then the distance between the covers $G_1^A$ and $G_2^A$ of the amalgam $A$ is $$\h{A}{G_1}{G_2} := \max_{u \in V(A)} \{d_A(u, \presek{A}{G_1}{G_2})\}.$$
\end{definition}

\begin{remark}
In \cite{banic-taranenko} authors introduced the Hausdorff graph $2^A$ of the graph $A$ and defined $\h{A}{G_1}{G_2}$ as the distance between the vertices $G_1$ and $G_2$ in the Hausdorff graph $2^A$, where those two vertices correspond to the subgraphs $G_1$ and $G_2$ of the graph $A$.
However, it was shown in \cite{kelenc-taranenko} that $\h{A}{G_1}{G_2} = \max_{u \in V(A)} \{d_A(u, \presek{A}{G_1}{G_2}\}$. For the sake of simplicity we define the distance between the covers $G_1^A$ and $G_2^A$ of the amalgam $A$ in this way.
\end{remark}

Given $G_1, G_2 \in \mathcal G$ and an amalgam $A$ of $G_1$ and $G_2$, Definition \ref{distanceBetweenCovers} says that to determine  
$h_A(G_1^A, G_2^A)$, one has to find a vertex $v \in V(A)$ with the maximum distance to $\presek{A}{G_1}{G_2}$ (since $h_A(G_1^A, G_2^A) = d_A(v, \presek{A}{G_1}{G_2})$).

The Hausdorff distance $\mathcal H:\mathcal G\times \mathcal G\rightarrow \mathbb R$ on $\mathcal G$ is defined as follows:
\begin{definition}\cite{banic-taranenko}\label{defHd}
For any graphs $G_1,G_2\in \mathcal G$, we define
\[
\mathcal H(G_1,G_2)= 
\begin{cases}
\min\left\{h_A(G_1^A,G_2^A)\ |\ A \in \mathcal{X}(G_1,G_2)\right\}, & \text{if }G_1 \not \cong G_2\\
0, & \text{if }G_1 \cong G_2
\end{cases}
.
\]
We call $\mathcal H$ \emph{the Hausdorff distance} on $\mathcal G$.
\end{definition}

From the Definition \ref{defHd} follows that the Hausdorff distance between two graphs is zero if and only if they are isomorphic. If two graphs are not isomorphic then there is at least one vertex outside of the intersection $\presek{A}{G_1}{G_2}$ of any amalgam and therefore the Hausdorff distance is at least one.

Note, Definition \ref{defHd} is equivalent to definition of the Hausdorff distance in \cite[Definition 4.18]{banic-taranenko}, where it is proven that $\mathcal H$ is a metric on the class of all simple connected pairwise non-isomorphic graphs. An amalgam $A$ of two simple connected graphs $G_1$ and $G_2$, for which $\h{A}{G_1}{G_2} = \Hd{G_1}{G_2}$ is called an \emph{optimal amalgam}.

To determine the Hausdorff distance between the graphs $G_1$ and $G_2$ from $\mathcal G$ one has to find an optimal amalgam.
Having a convex common subgraph of $G_1$ and $G_2$ an amalgam of graphs $G_1$ and $G_2$ can be constructed.
Therefore, the task is to find a convex common subgraph of $G_1$ and $G_2$ such that the distance between the covers $G_1^A$ and $G_2^A$ of the corresponding amalgam $A$ is minimized.

In \cite{kelenc-taranenko} the Hausdorff distance between the families of some chemical graphs were considered. The exact formulae for the Hausdorff distance between paths and cycles were given. Trees were also considered and the exact exponential time algorithm for trees was introduced. The authors stated the following open problem:

\begin{problem}\cite{kelenc-taranenko}\label{q1}
Is there a polynomial algorithm that determines the Haudsorff distance between two arbitrary trees?
\end{problem}

In the next sections we give an affirmative answer to Problem \ref{q1} and present such an algorithm.


\section{Preparation for the algorithm} \label{sectionPreparation}
The main procedure of the algorithm is working on the so called \emph{top-down common subtrees}  and therefore we need the following definitions summarized in \cite{valiente}.

\begin{definition}
Let $T=(V(T),E(T))$ be a rooted tree. A \emph{subtree} of $T$ is a connected subgraph of $T$. A \emph{top-down subtree} $S=(V(S),E(S))$ is a rooted subtree of $T$ where $parent[v] \in V(S)$, for all non-root vertices $v \in V(S)$.
Let $u \in V(T)$. A subtree of $T$ is called \emph{a subtree rooted at $u$} if it is induced on a vertex set $\{u\} \cup descendants[u]$.
\end{definition}

\begin{definition}
Two rooted trees $T_1=(V(T_1),E(T_1))$ and $T_2=(V(T_2),E(T_2))$ are \emph{isomorphic} if there is a bijection $M \subseteq V(T_1) \times V(T_2)$ such that $(root[T_1],root[T_2]) \in M$ and $(parent[v],parent[u]) \in M$, for all non-root vertices $v \in V(T_1), u \in V(T_2)$ with $(v,u) \in M$. The set $M$ is called \emph{a rooted tree isomorphism}.
\end{definition}

\begin{definition}\label{topDownCommonSubtree}
A \emph{top-down common subtree} of the rooted tree $T_1=(V(T_1),E(T_1))$ and the rooted tree $T_2=(V(T_2), E(T_2))$ is a structure $(S_1,S_2,M)$, where $S_1=(V(S_1),E(S_1))$ is a top-down subtree of $T_1$, $S_2=(V(S_2),E(S_2))$ is a top-down subtree of $T_2$ and $M \subseteq V(S_1) \times V(S_2) $ is a rooted tree isomorphism of $S_1$ and $S_2$.
\end{definition}

\begin{example}
In Figure \ref{primerSubtrees} there are two trees $T_1$ and $T_2$.
A subtree $S_1$ induced on the vertex set $\{v_2, v_6, v_7, v_8, v_9, v_{11}\}$ is a top-down subtree of $T_1$. Similarly, a subtree $S_2$ induced on the vertex set $\{u_3, u_4, u_5, u_6, u_7, u_{8}\}$ is a top-down subtree of $T_2$.

\noindent
A subtree of $T_1$, induced with grey vertices, is a subtree rooted at vertex $v_5$ and it is not a top-down subtree since, for example $v_5$ is not the root and $parent[v_5]$ is not in the subtree.

\noindent
Let $M=\{(v_2,u_3), (v_6,u_4), (v_7, u_5), (v_8, u_6), (v_9,u_7), (v_{11},u_8)\}$ be a rooted tree isomorphism of $S_1$ and $S_2$.
The structure $(S_1,S_2, M)$ is a top-down common subtree of rooted trees $T_1$ and $T_2$.

\begin{figure}[h!]
  \begin{center}
    \begin{tikzpicture}[scale=1.1]
		\tikzstyle{rn}=[circle,fill=white,draw, inner sep=0pt, minimum size=5pt]
		\tikzstyle{every node}=[font=\footnotesize]

\node (1)[rn, label={[label distance=1](-90:$v_{1}$}] at (-4 cm, 0 cm){};
\node (2)[rn,fill=black, label={[label distance=1](135:$v_{2}$}] at (-4 cm, 1 cm){};
\node (3)[rn, fill=black!20, label={[label distance=1](-90:$v_{3}$}] at (-3.5 cm, 0 cm){};
\node (4)[rn, fill=black!20, label={[label distance=1](-90:$v_{4}$}] at (-2.5 cm, 0 cm){};
\node (5)[rn,fill=black!20, label={[label distance=1](45:$v_{5}$}] at (-3 cm, 1 cm){};
\node (6)[rn,fill=black, label={[label distance=1](135:$v_{6}$}] at (-3.5 cm, 2 cm){};
\node (7)[rn,fill=black, label={[label distance=1](-90:$v_{7}$}] at (-1.5 cm, 0 cm){};
\node (8)[rn,fill=black, label={[label distance=1](0:$v_{8}$}] at (-1.5 cm, 1 cm){};
\node (9)[rn,fill=black, label={[label distance=1](180:$v_{9}$}] at (-1.5 cm, 2 cm){};
\node (10)[rn, label={[label distance=1](90:$v_{10}$}] at (-0.5 cm, 2 cm){};
\node (11)[rn,fill=black, label={[label distance=1](90:$v_{11}$}] at (-2 cm, 3 cm){};

\node (100) at (-2.25 cm, -1 cm){$T_1$};
\node (101) at (2.5 cm, -1 cm){$T_2$};

\path (1) edge node {} (2);
\path (2) edge node {} (6);
\path (3) edge node {} (5);
\path (4) edge node {} (5);
\path (5) edge node {} (6);
\path (6) edge node {} (11);
\path (7) edge node {} (8);
\path (8) edge node {} (9);
\path (9) edge node {} (11);
\path (10) edge node {} (11);

\node (-1)[rn, label={[label distance=1](-90:$u_{1}$}] at (1 cm, 1 cm){};
\node (-2)[rn, label={[label distance=1](-90:$u_{2}$}] at (2 cm, 1 cm){};
\node (-3)[rn, fill=black, label={[label distance=1](-90:$u_{3}$}] at (3 cm, 1 cm){};
\node (-4)[rn, fill=black, label={[label distance=1](90:$u_{4}$}] at (2 cm, 2 cm){};
\node (-5)[rn,fill=black, label={[label distance=1](-90:$u_{5}$}] at (4 cm, 0 cm){};
\node (-6)[rn,fill=black, label={[label distance=1](0:$u_{6}$}] at (4 cm, 1 cm){};
\node (-7)[rn,fill=black, label={[label distance=1](90:$u_{7}$}] at (4 cm, 2 cm){};
\node (-8)[rn,fill=black, label={[label distance=1](90:$u_{8}$}] at (3 cm, 3 cm){};

\path (-1) edge node {} (-4);
\path (-2) edge node {} (-4);
\path (-3) edge node {} (-4);
\path (-4) edge node {} (-8);
\path (-5) edge node {} (-6);
\path (-6) edge node {} (-7);
\path (-7) edge node {} (-8);
		
	\end{tikzpicture}
    \caption{Illustration of the concepts defined above.}
    \label{primerSubtrees}
  \end{center}
\end{figure}
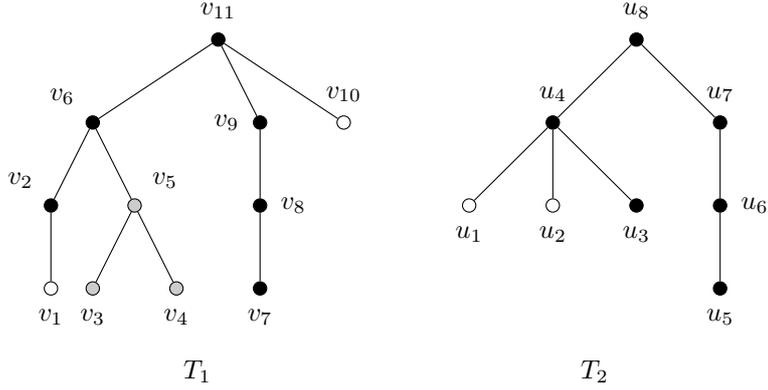
\end{example}

We proceed with some general properties of the Hausdorff distance between two simple connected graphs and some properties of the Hausdorff distance between two trees.


For a convex common subgraph of two simple connected graphs one can take a trivial subgraph on one vertex from each factor. If central vertices from the both factors are taken as a convex common subgraph then we get a natural upper bound on the Hausdorff distance between the two graphs:

\begin{theorem}\cite{kelenc-taranenko}\label{zgornjaMejaRadij}
Let $G_1$ and $G_2$ be two arbitrary simple, connected graphs. Then 
$$\Hd{G_1}{G_2} \leq \max\left\{ \rad(G_1), \rad(G_2) \right\}.$$ 
\qed
\end{theorem}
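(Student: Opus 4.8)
The plan is to exhibit an explicit amalgam achieving the claimed bound. Without loss of generality assume $\rad(G_1) \geq \rad(G_2)$, so the right-hand side equals $\rad(G_1)$. If $G_1 \cong G_2$ the Hausdorff distance is $0$ and there is nothing to prove, so assume $G_1 \not\cong G_2$. Pick a central vertex $c_1 \in \cen(G_1)$ and an arbitrary vertex $c_2 \in V(G_2)$ (one could also take $c_2$ central, but it is not needed). The single-vertex subgraphs $H_1 = \langle\{c_1\}\rangle$ and $H_2 = \langle\{c_2\}\rangle$ are connected, and a one-vertex subgraph is trivially convex in any graph, since any two vertices of $H_i$ coincide and the only shortest path between a vertex and itself is that vertex. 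Moreover $H_1 \cong H_2$ since both are the one-vertex graph. Hence we may form an amalgam $A \in \mathcal{X}(G_1,G_2)$ by identifying $c_1$ with $c_2$; call the identified vertex $c$. Then $\presek{A}{G_1}{G_2} = \{c\}$.

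Next I would estimate $h_A(G_1^A, G_2^A) = \max_{u \in V(A)} d_A(u, \{c\})$. Every vertex $u \in V(A)$ lies in $G_1^A$ or in $G_2^A$. If $u \in V(G_1^A)$, then since $G_1^A$ is an isometric-enough copy of $G_1$ containing $c$ (distances inside the cover agree with distances in $G_1$, as the amalgam is built by gluing along a single vertex so no shortcuts are created within a cover), we get $d_A(u,c) \leq d_{G_1}(u, c_1) \leq \e_{G_1}(c_1) = \rad(G_1)$, using that $c_1$ is a central vertex. Similarly, if $u \in V(G_2^A)$ then $d_A(u,c) \leq \e_{G_2}(c_2) \leq \diam(G_2) \leq 2\rad(G_2)$ — but this is the wrong side of the inequality, so instead I take $c_2$ to be a central vertex of $G_2$ as well, giving $d_A(u,c) \leq \rad(G_2) \leq \rad(G_1)$. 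Combining the two cases, $d_A(u,c) \leq \rad(G_1) = \max\{\rad(G_1),\rad(G_2)\}$ for every $u$, hence $h_A(G_1^A,G_2^A) \leq \max\{\rad(G_1),\rad(G_2)\}$. Since $\Hd{G_1}{G_2}$ is the minimum of $h_A$ over all amalgams, the theorem follows.

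The one point that needs care — the "main obstacle," though it is minor — is justifying that distances measured inside a cover $G_i^A$ do not shrink when ambient distances in $A$ are used; that is, $d_A(u,c) \leq d_{G_i}(u, c_i)$ for $u$ in the $i$-th cover. This holds because any $u$–$c$ walk in $A$ that leaves $G_i^A$ must re-enter through the single cut vertex $c$, so it can be shortened by cutting it at $c$; thus a shortest $u$–$c$ path in $A$ may be taken inside $G_i^A$, where it corresponds to a path in $G_i$ of the same length, and the shortest such has length $d_{G_i}(u,c_i) \leq \e_{G_i}(c_i)$. (In fact equality $d_A(u,c) = d_{G_i}(u,c_i)$ holds, but only the upper bound is needed.) Everything else is a direct appeal to the definitions of eccentricity, radius, convex subgraph, and amalgam given above.
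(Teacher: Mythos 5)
Your proof is correct and follows exactly the approach the paper indicates: the theorem is quoted from \cite{kelenc-taranenko} without a written proof, but the sentence preceding it ("If central vertices from both factors are taken as a convex common subgraph then we get a natural upper bound\dots") describes precisely your construction of gluing the two graphs at central vertices and bounding each cover's contribution by its radius. The one wrinkle in your write-up --- first taking $c_2$ arbitrary and then correcting to a central vertex --- is self-repaired, and your justification that shortest paths to the cut vertex stay inside each cover is the right (and only) point needing care.
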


Any tree has either one central vertex or two adjacent central vertices. If $|\cen(T)|=1$ then we say that a tree $T$ is \emph{central}. Otherwise it is \emph{bicentral}.
The next theorem states that in the tree with the larger diameter there always exists at least one central vertex that is in every optimal amalgam.

\begin{theorem}\cite{kelenc-taranenko}\label{centerVecjeJeNot}
Let $T_1$ and $T_2$ be two arbitrary non-trivial trees, with $\diam(T_1) \geq \diam(T_2)$. Let $c \in \cen(T_1)$. Then for every optimal amalgam $A \in  \mathcal{X}(T_1, T_2)$ it holds that $\{c^A\} \subseteq V(\presek{A}{T_1}{T_2})$. 

\qed
\end{theorem}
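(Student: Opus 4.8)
The plan is to show that any amalgam $A\in\mathcal X(T_1,T_2)$ whose intersection $\presek{A}{T_1}{T_2}$ does not contain $c^A$ satisfies $\h{A}{T_1}{T_2}>\Hd{T_1}{T_2}$, so that $A$ cannot be optimal. First I would record three preliminary facts. An amalgam of two trees along a common (convex, equivalently connected) subtree is again a tree --- it is connected and an edge count yields $|E(A)|=|V(A)|-1$ --- so $T_1^A$ and $T_2^A$ are subtrees of $A$, they are convex in $A$, and the inclusions $T_i\hookrightarrow A$ are isometric. Writing $H_1\subseteq T_1$ and $H_2\subseteq T_2$ for the identified subtrees, this gives
\[
\h{A}{T_1}{T_2}=\max\Big(\ \max_{u\in V(T_1)}d_{T_1}(u,H_1),\ \max_{u\in V(T_2)}d_{T_2}(u,H_2)\ \Big).
\]
Finally, $\diam(T_1)\ge\diam(T_2)$ forces $\rad(T_1)\ge\rad(T_2)$, so Theorem~\ref{zgornjaMejaRadij} gives $\Hd{T_1}{T_2}\le\rad(T_1)$, which is the benchmark for the rest. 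The structural input is the classical fact that $\cen(T)$ is the midpoint of every diametral path of $T$; from it one reads the branch-depth profile at a central vertex, and I would isolate two consequences: if $T_1$ is central then $c$ has at least two branches of depth $\rad(T_1)$; if $T_1$ is bicentral with $\cen(T_1)=\{c,c'\}$ then the branch at $c$ containing $c'$ has depth exactly $\rad(T_1)$, every other branch at $c$ has depth at most $\rad(T_1)-1$, and at least one of them has depth exactly $\rad(T_1)-1$ (and symmetrically at $c'$).

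Now fix an amalgam $A$ with $c\notin V(H_1)$; as $H_1$ is connected it lies in a single component of $T_1-c$. In the generic situation --- $T_1$ central, or $T_1$ bicentral with $c'\notin V(H_1)$ --- there is a central vertex $z\in\{c,c'\}$ with $z\notin V(H_1)$ possessing a branch of depth $\rad(T_1)$ disjoint from $H_1$ (in the central case two such branches exist at $c$; in the bicentral case use the branch at $c$ towards $c'$, or the branch at $c'$ towards $c$, according to where $H_1$ sits). Taking a deepest vertex $w$ of that branch, the $w$--$H_1$ path runs through $z$, so
\[
d_{T_1}(w,H_1)=d_{T_1}(w,z)+d_{T_1}(z,H_1)\ \ge\ \rad(T_1)+1 ,
\]
whence $\h{A}{T_1}{T_2}\ge\rad(T_1)+1>\Hd{T_1}{T_2}$ and $A$ is not optimal. (Checking that the generic hypothesis holds unless $T_1$ is bicentral with exactly one of $c,c'$ inside $H_1$ is a short case check using connectedness of $H_1$.)

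The one remaining and genuinely delicate case is: $T_1$ bicentral, $\cen(T_1)=\{c,c'\}$, $c'\in V(H_1)$ but $c\notin V(H_1)$. Here the $T_1$-side only yields $\max_u d_{T_1}(u,H_1)\ge\rad(T_1)$ --- take the depth-$(\rad(T_1)-1)$ branch at $c$ from the profile, whose deepest vertex $w$ satisfies $d_{T_1}(w,H_1)=d_{T_1}(w,c)+d_{T_1}(c,H_1)\ge(\rad(T_1)-1)+1$ --- which is not yet a contradiction. To finish I would produce a strictly better amalgam $B$ by identifying the edge $cc'$ of $T_1$ (a convex $K_2$) with an edge $e$ of $T_2$: the central edge of $T_2$ if $T_2$ is bicentral, otherwise any edge incident to the centre of $T_2$. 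A short computation gives $\max_u d_{T_1}(u,\{c,c'\})=\rad(T_1)-1$, and for the chosen $e$ one gets $\max_u d_{T_2}(u,e)\le\rad(T_1)-1$: in the bicentral case the central edge gives exactly $\rad(T_2)-1\le\rad(T_1)-1$; in the central case $\max_u d_{T_2}(u,e)\le\rad(T_2)$, and here $\rad(T_2)<\rad(T_1)$, since $\rad(T_2)=\rad(T_1)$ with $T_2$ central and $T_1$ bicentral would force $\diam(T_2)=2\rad(T_1)>\diam(T_1)$. Therefore $\h{B}{T_1}{T_2}=\rad(T_1)-1<\rad(T_1)\le\h{A}{T_1}{T_2}$, so $\Hd{T_1}{T_2}\le\h{B}{T_1}{T_2}<\h{A}{T_1}{T_2}$ and again $A$ is not optimal. (The tiny case $T_1\cong K_2$ is immediate, since then an optimal amalgam has value $0$ and identifies all of $T_1$.)

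I expect this last case to be the real obstacle. In the generic case the suboptimality of $A$ is already witnessed inside $T_1$ alone, but when $H_1$ meets $\cen(T_1)$ in exactly one vertex the $T_1$-side gives no slack, so one is forced to use $\diam(T_1)\ge\diam(T_2)$ essentially and to exhibit a concrete competitor amalgam; arranging that the competitor built from the edge $cc'$ provably beats $A$ for every shape of $T_2$ --- in particular handling the borderline $\rad(T_2)=\rad(T_1)$ and the requirement that $T_2$ actually contain a suitable edge $e$ --- is where the argument needs the most care.
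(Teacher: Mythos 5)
The paper does not actually prove this theorem --- it is quoted from \cite{kelenc-taranenko} with a bare \emph{qed} --- so there is no in-text proof to compare against; judged on its own, your argument is correct and complete. The skeleton is sound: the amalgam of two trees over a connected common subtree is a tree, both covers embed isometrically, so $\h{A}{T_1}{T_2}=\max\bigl(\max_u d_{T_1}(u,H_1),\max_u d_{T_2}(u,H_2)\bigr)$, and the benchmark $\Hd{T_1}{T_2}\le\rad(T_1)$ comes from Theorem~\ref{zgornjaMejaRadij} together with $\rad(T)=\lceil\diam(T)/2\rceil$ for trees. Your branch-depth profile at a central vertex is the standard midpoint-of-diametral-path fact, and the generic case ($T_1$ central, or bicentral with both central vertices outside $H_1$) correctly yields a vertex at distance at least $\rad(T_1)+1$ from $H_1$, which already kills optimality.

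You are also right that the whole weight of the theorem sits in the case where $T_1$ is bicentral and $H_1$ contains $c'$ but not $c$: there the $T_1$-side only gives $\rad(T_1)$, and one genuinely must exhibit a better competitor. Your competitor --- identifying the central edge $cc'$ with a central edge of $T_2$ (bicentral case) or an edge at the centre of $T_2$ (central case) --- achieves $\rad(T_1)-1$, and the parity argument ruling out $\rad(T_2)=\rad(T_1)$ when $T_2$ is central and $T_1$ bicentral (since that would force $\diam(T_2)=2\rad(T_1)>2\rad(T_1)-1=\diam(T_1)$) is exactly the point where the hypothesis $\diam(T_1)\ge\diam(T_2)$ is used essentially; this matches the known fact that the theorem can fail for the smaller-diameter tree. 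The boundary situations all check out: a bicentral $T_1$ with $\rad(T_1)\ge 2$ has, at each central vertex, a non-$c'$ branch of depth exactly $\rad(T_1)-1$ coming from a diametral path (so the central vertices have degree at least $2$ there), the only exception being $T_1\cong K_2$, which you dispose of separately, and $T_2$ non-trivial guarantees the edge $e$ exists. En passant your Case B establishes the sharper bound $\Hd{T_1}{T_2}\le\rad(T_1)-1$ whenever $T_1$ is bicentral, which is the same refinement that drives the original proof in \cite{kelenc-taranenko}. No gaps.
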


On the other hand, an example was presented in \cite{kelenc-taranenko} showing that this may not hold for the tree with a smaller diameter. 

We will also need to find maximum matchings in bipartite graphs. A maximum matching in bipartite graph $G=(V(G),E(G))$ is called \emph{a maximum bipartite matching}. The problem of finding a maximum bipartite matching can be solved in polynomial time. The Hopcroft-Karp algorithm \cite{hopcroft-karp} finds a maximum bipartite matching in $\mathcal{O}(\sqrt{|V(G)|}|E(G)|)$ time.

Recall, to determine the Hausdorff distance between two trees, one has to find a convex common subgraph (a subtree) of the input trees such that the distance between the covers of the corresponding amalgam is minimized (an optimal amalgam). Note, a subtree of a tree is always a convex subgraph.

An amalgam of trees $T_1$ and $T_2$ is a tree. If we root an amalgam $A$ at a vertex from the intersection of the amalgam $v^A \in V(\presek{A}{T_1}{T_2})$, then the intersection of the amalgam is a top-down subtree of the amalgam $A$.
The subtrees of $T_1$ and $T_2$ that give rise to the rooted amalgam $A$ are top-down subtrees of the trees $T_1$ and $T_2$ rooted in the vertices corresponding to the vertex $v^A$.
We can get any optimal amalgam by finding the appropriate top-down subtrees of the input trees, so the procedure of the algorithm works on top-down common subtrees, and therefore, we have to root both input trees.
\textit{Optimal top-down amalgam} is an amalgam optimal with respect to the rooted structure; meaning that the corresponding isomorphism is a rooted tree isomorphism. We call a top-down common subtree optimal if the corresponding amalgam is an optimal top-down amalgam.  
Note, both root vertices of an optimal top-down common subtree have to be in the intersection of the corresponding amalgam, since the corresponding isomorphism is a rooted tree isomorphism.

\begin{example}
We can see that in Figure \ref{primerAlgoritem} there are two non-isomorphic rooted trees $T_1$ and $T_2$. Since the top-down common subtree labeled with black vertices gives rise to an amalgam in which the distance between the covers is equal to one, it follows that this is an optimal top-down common subtree.

\begin{figure}[h!]
  \begin{center}
    \begin{tikzpicture}[scale=1.1]
		\tikzstyle{rn}=[circle,fill=white,draw, inner sep=0pt, minimum size=5pt]
		\tikzstyle{every node}=[font=\footnotesize]

\node (1)[rn, label={[label distance=1](-90:$v_{1}$}] at (-4 cm, 0 cm){};
\node (2)[rn,fill=black, label={[label distance=1](135:$v_{2}$}] at (-4 cm, 1 cm){};
\node (3)[rn, label={[label distance=1](-90:$v_{3}$}] at (-3.5 cm, 0 cm){};
\node (4)[rn, label={[label distance=1](-90:$v_{4}$}] at (-2.5 cm, 0 cm){};
\node (5)[rn,fill=black, label={[label distance=1](45:$v_{5}$}] at (-3 cm, 1 cm){};
\node (6)[rn,fill=black, label={[label distance=1](135:$v_{6}$}] at (-3.5 cm, 2 cm){};
\node (7)[rn,fill=black, label={[label distance=1](-90:$v_{7}$}] at (-1.5 cm, 0 cm){};
\node (8)[rn,fill=black, label={[label distance=1](0:$v_{8}$}] at (-1.5 cm, 1 cm){};
\node (9)[rn,fill=black, label={[label distance=1](180:$v_{9}$}] at (-1.5 cm, 2 cm){};
\node (10)[rn, label={[label distance=1](90:$v_{10}$}] at (-0.5 cm, 2 cm){};
\node (11)[rn,fill=black, label={[label distance=1](90:$v_{11}$}] at (-2 cm, 3 cm){};

\node (100) at (-2.25 cm, -1 cm){$T_1$};
\node (101) at (2.5 cm, -1 cm){$T_2$};

\path (1) edge node {} (2);
\path (2) edge node {} (6);
\path (3) edge node {} (5);
\path (4) edge node {} (5);
\path (5) edge node {} (6);
\path (6) edge node {} (11);
\path (7) edge node {} (8);
\path (8) edge node {} (9);
\path (9) edge node {} (11);
\path (10) edge node {} (11);

\node (-1)[rn, label={[label distance=1](-90:$u_{1}$}] at (1 cm, 1 cm){};
\node (-2)[rn,fill=black, label={[label distance=1](-90:$u_{2}$}] at (2 cm, 1 cm){};
\node (-3)[rn, fill=black, label={[label distance=1](-90:$u_{3}$}] at (3 cm, 1 cm){};
\node (-4)[rn, fill=black, label={[label distance=1](90:$u_{4}$}] at (2 cm, 2 cm){};
\node (-5)[rn,fill=black, label={[label distance=1](-90:$u_{5}$}] at (4 cm, 0 cm){};
\node (-6)[rn,fill=black, label={[label distance=1](0:$u_{6}$}] at (4 cm, 1 cm){};
\node (-7)[rn,fill=black, label={[label distance=1](90:$u_{7}$}] at (4 cm, 2 cm){};
\node (-8)[rn,fill=black, label={[label distance=1](90:$u_{8}$}] at (3 cm, 3 cm){};

\path (-1) edge node {} (-4);
\path (-2) edge node {} (-4);
\path (-3) edge node {} (-4);
\path (-4) edge node {} (-8);
\path (-5) edge node {} (-6);
\path (-6) edge node {} (-7);
\path (-7) edge node {} (-8);
		
	\end{tikzpicture}
    \caption{An optimal top-down common subtree of trees $T_1$ (rooted at $v_{11}$) and $T_2$ (rooted at $u_8$). It is labeled with black vertices in both trees.}
    \label{primerAlgoritem}
  \end{center}
\end{figure}
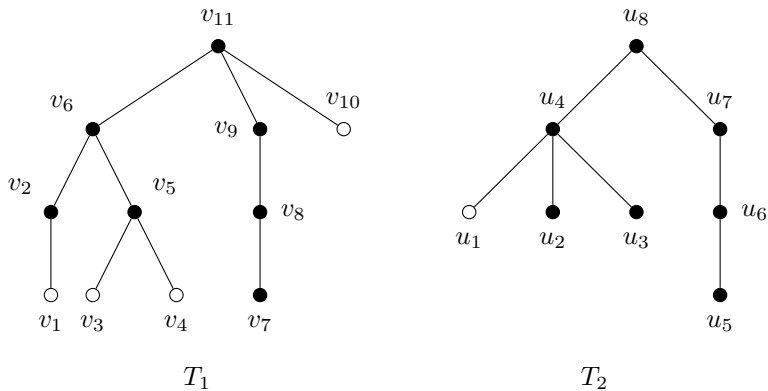
\end{example}

\begin{sloppypar}
As the input of the algorithm we get two non-rooted trees $T_1=(V(T_1),E(T_1))$ and $T_2=(V(T_2),E(T_2))$, where $\diam{(T_1)} \geq \diam{(T_2)}$. 
Since a central vertex of $T_1$ is in the intersection of any optimal amalgam (Theorem \ref{centerVecjeJeNot}) we can root $T_1$ in a central vertex.
For $T_2$ we have no such property.
In the example below we can see that an optimal top-down amalgam is not necessarily an optimal amalgam (non-rooted). This depends on the choice of the root vertices of the input trees $T_1$ and $T_2$.
If we root tree $T_2$ in each vertex $v \in V(T_2)$ and run the procedure for each such case, then we are guaranteed that the algorithm is able to find a common subtree of the input trees such that the distance between the covers of the corresponding amalgam is minimized. In other words, this way the algorithm finds an optimal top-down amalgam that is also an optimal amalgam.
\end{sloppypar}

\begin{example}
Figure \ref{primerAlgoritemNotOptimal} shows an optimal top-down common subtree of the non isomorphic rooted trees $T_1$ and $T_2$.  Trees $T_1$ and $T_2$ are almost the same to those in Figure \ref{primerAlgoritem}, with the difference that tree $T_2$ here is rooted in the vertex $u_7$.
An optimal top-down common subtree is induced by black vertices and it gives rise to an amalgam in which the distance between the covers is equal to two. Therefore, this common subtree does not minimize the distance between the covers of the corresponding amalgam of non-rooted trees. The minimum distance is one, see Figure \ref{primerAlgoritem}.

\begin{figure}[h!]
  \begin{center}
    \begin{tikzpicture}[scale=1.1]
		\tikzstyle{rn}=[circle,fill=white,draw, inner sep=0pt, minimum size=5pt]
		\tikzstyle{every node}=[font=\footnotesize]

\node (1)[rn,fill=black, label={[label distance=1](-90:$v_{1}$}] at (-4 cm, 0 cm){};
\node (2)[rn,fill=black, label={[label distance=1](135:$v_{2}$}] at (-4 cm, 1 cm){};
\node (3)[rn, label={[label distance=1](-90:$v_{3}$}] at (-3.5 cm, 0 cm){};
\node (4)[rn, label={[label distance=1](-90:$v_{4}$}] at (-2.5 cm, 0 cm){};
\node (5)[rn, label={[label distance=1](45:$v_{5}$}] at (-3 cm, 1 cm){};
\node (6)[rn,fill=black, label={[label distance=1](135:$v_{6}$}] at (-3.5 cm, 2 cm){};
\node (7)[rn, label={[label distance=1](-90:$v_{7}$}] at (-1.5 cm, 0 cm){};
\node (8)[rn,fill=black, label={[label distance=1](0:$v_{8}$}] at (-1.5 cm, 1 cm){};
\node (9)[rn,fill=black, label={[label distance=1](180:$v_{9}$}] at (-1.5 cm, 2 cm){};
\node (10)[rn, label={[label distance=1](90:$v_{10}$}] at (-0.5 cm, 2 cm){};
\node (11)[rn,fill=black, label={[label distance=1](90:$v_{11}$}] at (-2 cm, 3 cm){};

\node (100) at (-2.25 cm, -1 cm){$T_1$};
\node (101) at (2.5 cm, -1 cm){$T_2$};

\path (1) edge node {} (2);
\path (2) edge node {} (6);
\path (3) edge node {} (5);
\path (4) edge node {} (5);
\path (5) edge node {} (6);
\path (6) edge node {} (11);
\path (7) edge node {} (8);
\path (8) edge node {} (9);
\path (9) edge node {} (11);
\path (10) edge node {} (11);

\node (-1)[rn, fill=black, label={[label distance=1](-90:$u_{1}$}] at (1 cm, 0 cm){};
\node (-2)[rn, label={[label distance=1](-90:$u_{2}$}] at (2 cm, 0 cm){};
\node (-3)[rn, label={[label distance=1](-90:$u_{3}$}] at (3 cm, 0 cm){};
\node (-4)[rn, fill=black, label={[label distance=1](135:$u_{4}$}] at (2 cm, 1 cm){};
\node (-5)[rn,fill=black, label={[label distance=1](-90:$u_{5}$}] at (4 cm, 1 cm){};
\node (-6)[rn,fill=black, label={[label distance=1](0:$u_{6}$}] at (4 cm, 2 cm){};
\node (-7)[rn,fill=black, label={[label distance=1](90:$u_{7}$}] at (3 cm, 3 cm){};
\node (-8)[rn,fill=black, label={[label distance=1](90:$u_{8}$}] at (2 cm, 2 cm){};

\path (-1) edge node {} (-4);
\path (-2) edge node {} (-4);
\path (-3) edge node {} (-4);
\path (-4) edge node {} (-8);
\path (-5) edge node {} (-6);
\path (-6) edge node {} (-7);
\path (-7) edge node {} (-8);
		
	\end{tikzpicture}
    \caption{An optimal top-down common subtree of trees $T_1$ (rooted at $v_{11}$) and $T_2$ (rooted at $u_7$), induced on black vetrices in both trees.}
    \label{primerAlgoritemNotOptimal}
  \end{center}
\end{figure}
\end{example}

\section{The Algorithm} \label{sectionAlgorithm}
Now, we are ready to present the Algorithm \ref{algorithm_HD_trees} that determines the Hausdorff distance between two arbitrary trees $T_1$ and $T_2$ in polynomial time. The corresponding common subtree structure is also determined by the algorithm.

\SetKwData{T}{$T_1$}\SetKwData{TT}{$T_2$}
\SetKwData{Mcrta}{$M'$}
\SetKwData{MMM}{$M_{vu}$}
\SetKwData{setM}{$M$}
\SetKwData{V}{$v$}
\SetKwData{U}{$u$}
\SetKwData{R}{$r_1$}
\SetKwData{Distance}{$distance$}
\SetKwFunction{OptimalTopDownCommonSubtree}{OptimalTopDownCommonSubtree}
\SetKwFunction{ReconstructionOfMapping}{ReconstructionOfMapping}
\begin{algorithm}[!ht]
\DontPrintSemicolon
\SetKwData{Hd}{hd}
\SetKwData{OM}{$O$}
\SetKwData{OR}{$r_2$}
\SetKwData{Infinity}{$\infty$}
\SetKwData{Null}{null}\SetKwData{True}{true}
\SetKwFunction{ComputeHeights}{ComputeHeights}
\SetKwInOut{Input}{input}\SetKwInOut{Output}{output}
\Input{Arbitrary trees \T and \TT, where $\diam{(T_1)} \geq \diam{(T_2)}$.}
\Output{The Hausdorff distance between \T and \TT stored in \Hd, and the corresponding common subtree structure stored in \setM .}

\BlankLine
	\Hd $\leftarrow$ \Infinity \\
	\OM $\leftarrow \emptyset$ \\
	\R $\in center(\T)$ \\
	Compute heights of vertices of tree \T rooted in \R \\
	\ForEach{\U $\in V(\TT)$}{
		\Mcrta $\leftarrow \emptyset$ \\
		Compute heights of vertices of tree \TT rooted in \U \\
		\Distance $\leftarrow$ \OptimalTopDownCommonSubtree{\T,\R,\TT,\U,\Mcrta}  \\
		\If{\Distance $<$ \Hd}{
			\Hd $\leftarrow$ \Distance \\
			\OR $\leftarrow$ \U \\
			\OM $\leftarrow$ \Mcrta
		}
	}
\setM $\leftarrow \emptyset$ \\	
\ReconstructionOfMapping{\T,\R,\OR,\OM,\setM}

\caption{HausdorffDistanceBetweenTrees}\label{algorithm_HD_trees}
\end{algorithm}
\begin{sloppypar}
The algorithm uses two procedures. With respect to Definition \ref{topDownCommonSubtree}, an optimal top-down common subtree is a structure $(S_1,S_2,M)$ and therefore, we have to find a mapping $M$ from $T_1$ to $T_2$.
The procedure 
\OptimalTopDownCommonSubtree is for determining the distance between the covers of the optimal top-down amalgam of two rooted trees and the procedure 
\ReconstructionOfMapping is for the reconstruction of the subtree isomorphism that corresponds to the optimal amalgam. Notice that the first procedure is called many times with different rooted trees as input, while the second one (for the reconstruction of solution) is called just once, at the end of the algorithm.
\end{sloppypar}

First, let us describe the procedure 
\OptimalTopDownCommonSubtree. The result of the procedure is the distance between the covers of the optimal top-down amalgam of the input rooted trees. Remember, an optimal top-down common subtree gives rise to an optimal top-down amalgam. 
An optimal top-down common subtree of the rooted input trees $T_1$ and $T_2$ can be constructed with breaking down the original rooted trees to rooted subtrees and finding optimal top-down common subtrees of those smaller rooted trees. We start with the root vertices $r_1$ and $r_2$, and traverse both trees recursively. 

At each step we are in the vertices $v \in V(T_1)$ and $u \in V(T_2)$. We break down each rooted tree into rooted subtrees, such that the rooted subtrees of $T_1$ are rooted in the children of $v$ and the rooted subtrees of $T_2$ are rooted in the children of $u$. 
We consider optimal top-down common subtrees for all possible pairs of those smaller subtrees. After we get all optimal top-down common subtrees for the children of $v$ and children of $u$ we can combine some of them and determine an optimal top-down common subtree of the subtree rooted at $v$ and the subtree rooted at $u$. When we combine optimal top-down common subtrees of children of $v$ and children of $u$, we have to be careful that we do not combine one subtree with more than one other subtree.

We can easily determine an optimal top-down common subtree if one of the root vertices is a leaf of original input tree (subtree rooted at this root is a trivial graph). If a vertex $v \in V(T_1)$ is a leaf (or a vertex $u \in V(T_2)$ is a leaf) then mapping $v$ to $u$ gives an optimal top-down common subtree. The distance between the covers of the corresponding amalgam is determined by the farthest vertex from the root in the other subtree. The farthest vertex from the root is always at the distance equal to $height[u]$ (or $height[v]$), respectively. Therefore, one of the root vertices being a leaf is our stopping condition for the recursion.

Otherwise, $p=|children[v]|$, $ q=|children[u]|$ and without loss of generality assume $p \geq q$. Denote with $v_1, \ldots, v_p$ and $u_1, \ldots, u_q$ the children of $v$ and $u$, respectively. If $p > q$ then we add to the set $children[u]$ some \emph{dummy} vertices $D=\{d_1, \ldots, d_{p-q} \}$, otherwise $D= \emptyset$.  Build the complete bipartite graph
$$G_{vu}=\left(\{v_1,\ldots, v_p\} \cup \left( \{u_1, \ldots, u_q \} \cup D \right), E \right)$$ 
on $p+(q+|D|)=2p$ vertices with partition sets $\{v_1,\ldots, v_p\}$ and $\left( \{u_1, \ldots, u_q \} \cup D \right)$.
For technical reasons related to the reconstruction of an optimal top-down common subtree, the edges $(v_i, u_j) \in E$ of graph $G_{vu}$ are ordered pairs of vertices. The first vertex is from $T_1$ and the second is from $T_2$. 
Each edge of $G_{vu}$ is assigned a non-negative weight. 
We want that from the weights of the edges of the graph $G_{vu}$ we are able to determine the distance between the covers of an optimal top-down amalgam of a subtree rooted at $v$ and a subtree rooted at $u$.
The weight of an edge $(v_i, u_j) \in E$ is equal to the distance between the covers in an optimal top-down amalgam of a subtree (of $T_1$) rooted at $v_i$ and a subtree (of $T_2$) rooted at $u_j$.
Therefore, we will recursively call the same procedure with different root vertices. 
If $v_i \in V(T_1)$ is a leaf (or $u_j \in V(T_2)$ is a leaf) then the recursive call hits the stop condition and returns the distance $height[u]$ (or $height[v]$), respectively. 
A dummy vertex $d_k$ represents an empty subtree and no such top-down common subtree exists.
If we want that the weight of the edge $(v_i, d_k) \in E$ can possibly give rise to the distance between the covers of an optimal top-down amalgam of a subtree rooted at $v$ and a subtree rooted at $u$, then the edge $(v_i, d_k)$ must get the weight that is equal to the distance of the farthest vertex from the $v_i$ plus $1$ ($height[v]+1$), i.e. vertices $v$ and $u$ are in the intersection of such optimal top-down amalgam while the whole subtree rooted at $v_i$ is not in the intersection of such optimal top-down amalgam.

When all the weights of the graph $G_{vu}$ are determined we need to get the best possible combination of the corresponding optimal top-down amalgams to combine them into an optimal top-down amalgam $A$ of a subtree rooted at $v$ and a subtree rooted at $u$. We have to minimize the distance between the covers of an optimal top-down amalgam $A$. To do this we need the following concept.
Let $M_{vu}$ be a perfect matching of the complete bipartite graph $G_{vu}$ that minimizes the value of the largest weight (we will call it \emph{an optimal perfect matching}).

\begin{lemma}\label{lema_procedure_HD_trees}
The distance between the covers of an optimal top-down amalgam of a subtree (of $T_1$) rooted at $v$ and a subtree (of $T_2$) rooted at $u$ is equal to the largest weight in an optimal perfect matching $M_{vu}$.
\end{lemma}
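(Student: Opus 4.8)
The plan is to argue by induction on $|V(T_1^v)|+|V(T_2^u)|$, where $T_1^v$ and $T_2^u$ denote the subtrees of $T_1$ and $T_2$ rooted at $v$ and $u$, and where $h^{*}(v,u)$ abbreviates the distance between the covers of an optimal top-down amalgam of $T_1^v$ and $T_2^u$, i.e.\ the minimum of $h_A$ over all top-down common subtrees of $T_1^v$ and $T_2^u$. If $v$ or $u$ is a leaf the claim reduces to the stopping condition discussed above ($h^{*}(v,u)=height[u]$ or $height[v]$), which is immediate; so assume neither is a leaf, so that $G_{vu}$ is defined. Since the recursive calls are made on pairs $(v_i,u_j)$ with $T_1^{v_i}\subsetneq T_1^v$ and $T_2^{u_j}\subsetneq T_2^u$, the inductive hypothesis gives $w(v_i,u_j)=h^{*}(v_i,u_j)$ for every real edge of $G_{vu}$. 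Writing $W$ for the largest weight of an optimal perfect matching $M_{vu}$, I must show $h^{*}(v,u)=W$.

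First I would record a structural decomposition. Because a top-down common subtree $(S_1,S_2,M)$ of $T_1^v$ and $T_2^u$ uses a rooted tree isomorphism, $M$ sends the root $v$ to the root $u$, so $M$ restricts to a bijection between the set $X$ of children of $v$ lying in $S_1$ and the set $Y$ of children of $u$ lying in $S_2$. Rooting the associated amalgam $A$ at $v=u$, every non-root vertex of $A$ lies in exactly one of: a branch hanging below a matched pair $v_i=u_j$, a cut-off subtree $T_1^{v_i}$ with $v_i\notin X$, or a cut-off subtree $T_2^{u_j}$ with $u_j\notin Y$. A vertex in a cut-off subtree has distance to the intersection of the covers equal to its distance to $v=u$, hence at most $height[v_i]+1$ (resp.\ $height[u_j]+1$); a vertex in the branch below $v_i=u_j$ has distance to the intersection equal to its distance to the intersection of the induced sub-amalgam $A_i$ of $T_1^{v_i}$ and $T_2^{u_j}$, since $v_i=u_j$ is itself in the intersection so no shortest path to the intersection needs to leave the branch. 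Therefore $h_A$ equals the maximum of the numbers $h_{A_i}$ over matched pairs, $height[v_i]+1$ over $v_i\notin X$, and $height[u_j]+1$ over $u_j\notin Y$.

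To prove $W\le h^{*}(v,u)$, I would take a top-down common subtree realizing $h^{*}(v,u)$, let $X\to Y$ be the partial matching it induces, and extend it to a perfect matching $M'$ of $G_{vu}$: send $|D|$ of the cut-off children of $v$ to the dummy vertices (possible since $|X|=|Y|\le|children[u]|$, which forces the number of cut-off children of $v$ to be at least $|D|$), and match the remaining $|children[u]|-|Y|$ cut-off children of $v$ to the cut-off children of $u$ (a short count shows these two sets are equinumerous after the dummies are used up). Every edge of $M'$ has weight at most $h_A=h^{*}(v,u)$: a matched pair gives $w(v_i,u_j)=h^{*}(v_i,u_j)\le h_{A_i}\le h_A$; a dummy edge gives $height[v_i]+1\le h_A$ since $v_i\notin X$; and an edge joining two cut-off children gives $w(v_i,u_j)=h^{*}(v_i,u_j)\le\max(height[v_i],height[u_j])\le h_A$, the first inequality because mapping only the two roots is already a top-down common subtree of $T_1^{v_i}$ and $T_2^{u_j}$. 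For the reverse inequality $h^{*}(v,u)\le W$, I would build from $M_{vu}$ a top-down common subtree of $T_1^v$ and $T_2^u$ that maps $v$ to $u$, glues in an optimal top-down common subtree of $T_1^{v_i}$ and $T_2^{u_j}$ (achieving $h^{*}(v_i,u_j)=w(v_i,u_j)$) for each real edge $(v_i,u_j)\in M_{vu}$, and simply omits $v_i$ for each dummy edge. All children of $u$ are then matched to real children of $v$, so $Y=children[u]$ and nothing on the $T_2$ side is cut off; by the decomposition above the amalgam distance of this top-down common subtree is exactly the maximum of the edge weights of $M_{vu}$, namely $W$. Combining the two inequalities yields $h^{*}(v,u)=W$.

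I expect the main obstacle to be justifying the one-sidedness of the dummy vertices, i.e.\ ruling out that cutting off a child of $u$ is ever beneficial; this is exactly where the bound $h^{*}(v_i,u_j)\le\max(height[v_i],height[u_j])$ from the trivial (roots-only) top-down common subtree is needed, together with the counting identity that, after all dummies are assigned, the leftover cut-off children of $v$ and of $u$ are equinumerous and can be paired directly. A secondary point that needs care is the claim that distances to the intersection in $A$ decompose along branches, i.e.\ that a shortest path from a vertex to the intersection of the covers never has to climb above the top vertex of its branch, which holds because that top vertex is a matched vertex and hence already in the intersection.
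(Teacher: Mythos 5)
Your proof is correct and follows the same two-inequality strategy as the paper: perfect matchings of $G_{vu}$ correspond to ways of combining child sub-amalgams (giving $h^{*}(v,u)\le W$), and an optimal top-down common subtree induces a perfect matching whose largest weight is at most $h^{*}(v,u)$ (giving $W\le h^{*}(v,u)$). The paper's version is far terser and, in particular, never addresses the asymmetry of the dummy vertices (only children on the side with more children can be ``omitted'' via dummies), which your pairing of the leftover cut-off children of $v$ with the cut-off children of $u$, combined with the bound $h^{*}(v_i,u_j)\le\max\{height[v_i],height[u_j]\}\le h_A-1$, handles explicitly -- so your write-up supplies details the published argument leaves implicit, while remaining essentially the same proof.
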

\begin{proof}
Every perfect matching of the graph $G_{vu}$ corresponds to a bijective mapping between partitions of the graph $G_{vu}$.
Therefore, a perfect matching gives rise to a combination of optimal top-down amalgams between the subtrees rooted at $children[v]$ and subtrees rooted at $children[u]$ together with the dummy vertices. Every subtree rooted at some vertex from the set $children[v]$ is combined either with exactly one subtree rooted at some vertex from the $children[u]$ or exactly one dummy vertex. Such a combination of optimal top-down amalgams induces an amalgam $A$ of a subtree rooted at $v$ and a subtree rooted at $u$. The distance between the covers of the amalgam $A$ is equal to the largest weight in a perfect matching, since the weights of edges in the graph $G_{vu}$ are the distances between the covers of the corresponding optimal top-down amalgams.

Let $M_{vu}$ be an optimal perfect matching of the graph $G_{vu}$.
From the construction of the graph $G_{vu}$ it follows that the distance between the covers of an optimal top-down amalgam is at most the largest weight in an optimal perfect matching $M_{vu}$.
For the converse suppose, that the distance between the covers of an optimal top-down amalgam is less than the largest weight in an optimal perfect matching $M_{vu}$.
Using the corresponding subtree isomorphism $M$ of the optimal top-down common subtree we can construct the complete bipartite graph $G_{vu}'$ which has an optimal perfect matching with the largest weight that is smaller than the largest weight in $M_{vu}$, a contradiction with the construction of $G_{vu}$.
\end{proof}

Therefore, the distance between the covers of an optimal top-down amalgam is equal to

$$\min_{M \subset E} \left( \max_{e \in M}  w(e) \right),$$

\noindent
where $M$ is a perfect matching of the complete bipartite graph $G_{vu}$ and $w(e)$ represents the weight of the edge $e$.

When all the recursive calls are completed, we get back to the root vertices and the largest weight of the optimal perfect matching $M_{r_1u}$ is the distance between the covers of an optimal top-down amalgam of the rooted trees $T_1$ and $T_2$. 
\vspace{0.5cm}

\begin{procedure}[H]
\SetKwData{RR}{$r_2$}
\SetKwData{G}{$G_{vu}$}
\SetKwFunction{IsLeaf}{isLeaf}
\SetKwFunction{Height}{height}
\SetKwFunction{Dummy}{dummy}
\SetKwFunction{Weight}{weight}
\SetKwFunction{SolveOptimalPerfectMatching}{SolveOptimalPerfectMatching}
\SetKwFunction{Min}{min}
\SetKwFunction{Max}{max}
\SetKwInOut{Input}{input}\SetKwInOut{Output}{output}
\Input{Rooted tree \T and its root vertex \V, rooted tree \TT and its root vertex \U, and the union set of solutions to the optimal perfect matching problems \Mcrta.}
\Output{Distance between the subtree of \T rooted at \V and subtree of \TT rooted at \U, and the union set of solutions to all optimal perfect matchings solved during the procedure saved in \Mcrta.}

\BlankLine
\If{ \IsLeaf{\T,\V}  {\bf or} \IsLeaf{\TT,\U}}{
			\Return \Max{\Height{\T,\V} , \Height{\TT,\U}} \\
			
		}
Create the complete bipartite graph \G without edge weights \\
\ForEach{$e=xy \in \G$}{
	\uIf{$x$ is dummy vertex}{
		\Weight{$e$} $\leftarrow$ \Height{\TT ,$y$}$+1$
	}\uElseIf{$y$ is dummy vertex}{
		\Weight{$e$} $\leftarrow$ \Height{\T ,$x$}$+1$
	}\Else{
		\Weight{$e$} $\leftarrow$ \OptimalTopDownCommonSubtree{\T,$x$,\TT,$y$,\Mcrta}
	}
}
\MMM $\leftarrow$ \SolveOptimalPerfectMatching{\G} \\
\Distance $\leftarrow$ the largest weight of \MMM \\
Remove edges incident with dummy vertices from \MMM. \\
\Mcrta = \Mcrta $\cup$ \MMM \\
\Return \Distance

\caption{OptimalTopDownCommonSubtree()(\T,\V,\TT,\U,\Mcrta)}\label{procedure_HD}
\end{procedure}

\vspace{0.5cm}
The described procedure uses the sub-procedure \SolveOptimalPerfectMatching that finds a perfect matching of the complete bipartite graph $G_{vu}$ that minimizes the value of the largest weight (an optimal perfect matching) and returns it. For the sake of clarity  we will describe this sub-procedure briefly.

Given a complete bipartite graph $G_{vu}=(V(G_{vu}),E(G_{vu}))$ with $\lvert V(G_{vu}) \rvert = 2p$, we first sort the edges in the ascending order of the edge weights and make an array of all different edge weights. 
Then we find an optimal perfect matching of $G_{vu}$ by using binary search on the array of the edge weights where on each step we run Hopcroft-Karp algorithm for finding a maximum bipartite matching.
In each iteration we have an edge weight $w$ in the middle of the current array. Take the spanning subgraph $G_{vu}'$ of the graph $G_{vu}$ with all the edges having the weights smaller or equal than $w$.
Find a maximum bipartite matching $M_{vu}'$ of the graph $G_{vu}'$ using the Hopcroft-Karp algorithm. If $\lvert M_{vu}' \rvert = p$, then the optimal perfect matching $M_{vu}$ has the largest weight smaller of equal than $w$. We save current $M_{vu}'$ into $M_{vu}$ and we can continue the binary search in the left half of the current array. Otherwise, the optimal perfect matching $M_{vu}$ has the largest weight greater than $w$ and we can continue the binary search in the right half of the current array. 
Since the graph $G_{vu}$ is a finite complete bipartite graph there will be at least one maximum bipartite matching $M_{vu}'$ with cardinality $p$ during the described binary search. In the end, return the matching $M_{vu}$.

Let us take a look at an example of executing the procedure \OptimalTopDownCommonSubtree on the input rooted trees $T_1$ (rooted at $v_{11}$) and $T_2$ (rooted at $u_{8}$), both depicted in Figure \ref{primerAlgoritem}.

\begin{example}\label{primerMatchings}
We start with the tree $T_1$ rooted at $v_{11}$ and tree $T_2$ rooted at $u_8$. Since none of the root vertices is a leaf we build the following complete bipartite graph with edge weights table shown on the right hand-side:

\begin{center}
$G_{v_{11}u_8}$:
\end{center}

\begin{longtable}{M{4cm} M{2cm} M{4cm}}
   \begin{tikzpicture}[scale=1]
		\tikzstyle{rn}=[circle,fill=white,draw, inner sep=0pt, minimum size=5pt]
		\tikzstyle{every node}=[font=\footnotesize]

\node (1)[rn, label={[label distance=1](180:$v_{6}$}] at (-4 cm, 2 cm){};
\node (2)[rn, label={[label distance=1](180:$v_{9}$}] at (-4 cm, 1 cm){};
\node (3)[rn, label={[label distance=1](180:$v_{10}$}] at (-4 cm, 0 cm){};

\node (4)[rn, label={[label distance=1](0:$u_{4}$}] at (-2 cm, 2 cm){};
\node (5)[rn, label={[label distance=1](0:$u_{7}$}] at (-2 cm, 1 cm){};
\node (6)[rn,fill=black!20, label={[label distance=1](0:$d_{1}$}] at (-2 cm, 0 cm){};

\path (1) edge node {} (4);
\path (1) edge node {} (5);
\path (1) edge node {} (6);
\path (2) edge node {} (4);
\path (2) edge node {} (5);
\path (2) edge node {} (6);
\path (3) edge node {} (4);
\path (3) edge node {} (5);
\path (3) edge node {} (6);
		
	\end{tikzpicture}	
	& & 	\begin{tabular}{c|c|c|c}
	   & $u_4$ & $u_7$ & $d_1$ \\ 
	 \hline
	 $v_6$ &  &  & 3 \\
	 \hline
	 $v_9$ &  &  & 3  \\
	 \hline
	 $v_{10}$ & 1  & 2 & 1 \\
	\end{tabular}
	
  \\ 
	
 \end{longtable}
  
\noindent
We know the weights of edges if one of the endpoints is a leaf or a dummy vertex. To get the missing weights we have to proceed recursively down the trees. 

First, we want to determine the weight of the edge $v_6u_4$. In order to find the optimal top-down common subtree of the subtree rooted at $v_6$ and subtree rooted at $u_4$ we get complete bipartite graph $G_{v_6u_4}$:

\begin{longtable}{M{4cm} M{2cm} M{4cm}}
   \begin{tikzpicture}[scale=1]
		\tikzstyle{rn}=[circle,fill=white,draw, inner sep=0pt, minimum size=5pt]
		\tikzstyle{every node}=[font=\footnotesize]

\node (1)[rn, label={[label distance=1](180:$v_{2}$}] at (-4 cm, 2 cm){};
\node (2)[rn, label={[label distance=1](180:$v_{5}$}] at (-4 cm, 1 cm){};
\node (3)[rn,fill=black!20, label={[label distance=1](180:$d_{2}$}] at (-4 cm, 0 cm){};

\node (4)[rn, label={[label distance=1](0:$u_{1}$}] at (-2 cm, 2 cm){};
\node (5)[rn, label={[label distance=1](0:$u_{2}$}] at (-2 cm, 1 cm){};
\node (6)[rn, label={[label distance=1](0:$u_{3}$}] at (-2 cm, 0 cm){};

\path (1) edge node {} (4);
\path (1) edge node {} (5);
\draw[line width=1.5pt] (1) edge node {} (6);
\path (2) edge node {} (4);
\draw[line width=1.5pt] (2) edge node {} (5);
\path (2) edge node {} (6);
\draw[line width=1.5pt] (3) edge node {} (4);
\path (3) edge node {} (5);
\path (3) edge node {} (6);
		
	\end{tikzpicture}	
	& & 	\begin{tabular}{c|c|c|c}
	   & $u_1$ & $u_2$ & $u_3$ \\ 
	 \hline
	 $v_2$ & 1 & 1 & {\large \textcircled{\small 1}} \\
	 \hline
	 $v_5$ & 1 & {\large \textcircled{\small 1}} & 1  \\
	 \hline
	 $d_2$ & {\large \textcircled{\small 1}}   & 1 & 1 \\
	\end{tabular}
	
  \\ 
	
 \end{longtable}
 
\noindent
Since the vertices $u_1, u_2$ and $ u_3$ are leaves, all the weights are known. Therefore, we get an optimal perfect matching $M_{v_6u_4}=\{ (v_2,u_3),(v_5,u_2),(d_2,u_1) \}$ of the complete bipartite graph (drawn with bold edges and encircled weights). The largest weight of $M_{v_6u_4}$ is $1$, therefore the weight of the edge $v_6 u_4$ from graph $G_{v_{11}u_8}$ is $1$.

 Next, we want to determine the weight of the edge $v_6u_7$. In order to find the optimal top-down common subtree of the subtree rooted at $v_6$ and the subtree rooted at $u_7$ we get complete bipartite graph $G_{v_6u_7}$:

\begin{longtable}{M{4cm} M{2cm} M{4cm}}
   \begin{tikzpicture}[scale=1]
		\tikzstyle{rn}=[circle,fill=white,draw, inner sep=0pt, minimum size=5pt]
		\tikzstyle{every node}=[font=\footnotesize]

\node (1)[rn, label={[label distance=1](180:$v_{2}$}] at (-4 cm, 2 cm){};
\node (2)[rn, label={[label distance=1](180:$v_{5}$}] at (-4 cm, 1 cm){};

\node (4)[rn, label={[label distance=1](0:$u_{6}$}] at (-2 cm, 2 cm){};
\node (5)[rn, fill=black!20, label={[label distance=1](0:$d_{3}$}] at (-2 cm, 1 cm){};

\path (1) edge node {} (4);
\path (1) edge node {} (5);
\path (2) edge node {} (4);
\draw (2) edge node {} (5);
		
	\end{tikzpicture}	
	& & 	\begin{tabular}{c|c|c}
	   & $u_6$ & $d_3$ \\ 
	 \hline
	 $v_2$ &  & 2  \\
	 \hline
	 $v_5$ &  &  2  \\
	\end{tabular}
	
  \\ 
	
 \end{longtable}

\noindent
For the weights of edges $v_2u_6$ and $v_5u_6$ we have to find the optimal top-down common subtrees of the following two pairs of rooted subtrees. The first pair with the subtree rooted at $v_2$ and subtree rooted at $u_6$ yields the trivial weighted complete bipartite graph $G_{v_2u_6}$ with the optimal perfect matching $M_{v_2u_6}=\{ (v_1,u_5) \}$:

\begin{longtable}{M{4cm} M{2cm} M{4cm}}
   \begin{tikzpicture}[scale=1]
		\tikzstyle{rn}=[circle,fill=white,draw, inner sep=0pt, minimum size=5pt]
		\tikzstyle{every node}=[font=\footnotesize]

\node (1)[rn, label={[label distance=1](180:$v_{1}$}] at (-4 cm, 2 cm){};

\node (4)[rn, label={[label distance=1](0:$u_{5}$}] at (-2 cm, 2 cm){};

\draw[line width=1.5pt] (1) edge node {} (4);

	\end{tikzpicture}	
	& & 	\begin{tabular}{c|c}
	   & $u_5$  \\ 
	 \hline
	 $v_1$ & {\large \textcircled{\small 0}}   \\
	\end{tabular}
	
  \\ 
	
 \end{longtable}
 
\noindent
The second one with the subtree rooted at $v_5$ and subtree rooted at $u_6$ yields the complete bipartite graph $G_{v_5u_6}$ with optimal perfect matching $M_{v_5u_6}=\{ (v_3,u_5),(v_4,d_4) \}$:

\begin{longtable}{M{4cm} M{2cm} M{4cm}}
   \begin{tikzpicture}[scale=1]
		\tikzstyle{rn}=[circle,fill=white,draw, inner sep=0pt, minimum size=5pt]
		\tikzstyle{every node}=[font=\footnotesize]

\node (1)[rn, label={[label distance=1](180:$v_{3}$}] at (-4 cm, 2 cm){};
\node (2)[rn, label={[label distance=1](180:$v_{4}$}] at (-4 cm, 1 cm){};

\node (4)[rn, label={[label distance=1](0:$u_{5}$}] at (-2 cm, 2 cm){};
\node (5)[rn, fill=black!20, label={[label distance=1](0:$d_{4}$}] at (-2 cm, 1 cm){};

\draw[line width=1.5pt] (1) edge node {} (4);
\path (1) edge node {} (5);
\path (2) edge node {} (4);
\draw[line width=1.5pt] (2) edge node {} (5);
		
	\end{tikzpicture}	
	& & 	\begin{tabular}{c|c|c}
	   & $u_5$ & $d_4$ \\ 
	 \hline
	 $v_3$ & {\large \textcircled{\small 0}}  & 1  \\
	 \hline
	 $v_4$ & 0  &  {\large \textcircled{\small 1}}  \\
	\end{tabular}
	
  \\ 
	
 \end{longtable}

\noindent
 Therefore, the weights of edges $v_2u_6$ and $v_5u_6$ from graph $G_{v_6u_7}$ are $0$ and $1$, respectively.
 We have all the weights of the graph $G_{v_6u_7}$ to find the optimal top-down common subtree of the subtree rooted at $v_6$ and the subtree rooted at $u_7$:
 
 \begin{longtable}{M{4cm} M{2cm} M{4cm}}
   \begin{tikzpicture}[scale=1]
		\tikzstyle{rn}=[circle,fill=white,draw, inner sep=0pt, minimum size=5pt]
		\tikzstyle{every node}=[font=\footnotesize]

\node (1)[rn, label={[label distance=1](180:$v_{2}$}] at (-4 cm, 2 cm){};
\node (2)[rn, label={[label distance=1](180:$v_{5}$}] at (-4 cm, 1 cm){};

\node (4)[rn, label={[label distance=1](0:$u_{6}$}] at (-2 cm, 2 cm){};
\node (5)[rn, fill=black!20, label={[label distance=1](0:$d_{3}$}] at (-2 cm, 1 cm){};

\draw[line width=1.5pt] (1) edge node {} (4);
\path (1) edge node {} (5);
\path (2) edge node {} (4);
\draw[line width=1.5pt] (2) edge node {} (5);
		
	\end{tikzpicture}	
	& & 	\begin{tabular}{c|c|c}
	   & $u_6$ & $d_3$ \\ 
	 \hline
	 $v_2$ & {\large \textcircled{\small 0}}  & 2  \\
	 \hline
	 $v_5$ & 1 &  {\large \textcircled{\small 2}}  \\
	\end{tabular}
	
  \\ 
	
 \end{longtable}

\noindent
From the largest weight of optimal perfect matching $M_{v_6u_7}=\{ (v_2,u_6),(v_5,d_3) \}$ it follows that the weight of the edge $v_6u_7$ from graph $G_{v_{11}u_8}$ is equal to $2$.

Proceeding in the same way, we have to determine the weight of the edge $v_9u_4$.
In order to find the optimal top-down common subtree of the subtree rooted at $v_9$ and the subtree rooted at $u_4$ we get the complete bipartite graph $G_{v_9u_4}$ with the optimal perfect matching $M_{v_9u_4}=\{ (v_8,u_1),(d_5,u_2), (d_6,u_3) \}$:

\begin{longtable}{M{4cm} M{2cm} M{4cm}}
   \begin{tikzpicture}[scale=1]
		\tikzstyle{rn}=[circle,fill=white,draw, inner sep=0pt, minimum size=5pt]
		\tikzstyle{every node}=[font=\footnotesize]

\node (1)[rn, label={[label distance=1](180:$v_{8}$}] at (-4 cm, 2 cm){};
\node (2)[rn,fill=black!20, label={[label distance=1](180:$d_{5}$}] at (-4 cm, 1 cm){};
\node (3)[rn,fill=black!20, label={[label distance=1](180:$d_{6}$}] at (-4 cm, 0 cm){};

\node (4)[rn, label={[label distance=1](0:$u_{1}$}] at (-2 cm, 2 cm){};
\node (5)[rn, label={[label distance=1](0:$u_{2}$}] at (-2 cm, 1 cm){};
\node (6)[rn, label={[label distance=1](0:$u_{3}$}] at (-2 cm, 0 cm){};

\draw[line width=1.5pt] (1) edge node {} (4);
\path (1) edge node {} (5);
\draw (1) edge node {} (6);
\path (2) edge node {} (4);
\draw[line width=1.5pt] (2) edge node {} (5);
\path (2) edge node {} (6);
\draw (3) edge node {} (4);
\path (3) edge node {} (5);
\draw[line width=1.5pt] (3) edge node {} (6);
		
	\end{tikzpicture}	
	& & 	\begin{tabular}{c|c|c|c}
	   & $u_1$ & $u_2$ & $u_3$ \\ 
	 \hline
	 $v_8$ & {\large \textcircled{\small 1}} & 1 & 1 \\
	 \hline
	 $d_5$ & 1 & {\large \textcircled{\small 1}} & 1  \\
	 \hline
	 $d_6$ & 1   & 1 & {\large \textcircled{\small 1}} \\
	\end{tabular}
	
  \\ 
	
 \end{longtable}

\noindent
The largest weight of the optimal perfect matching $M_{v_9u_4}$ is equal to $1$ so the weight of the edge $v_9u_4$ from graph $G_{v_{11}u_8}$ is $1$.

To get the last missing weight, namely the weight of the edge $v_9u_7$, from the graph $G_{v_{11}u_8}$ we have to find the optimal top-down common subtree of the subtree rooted at $v_9$ and the subtree rooted at $u_7$. We get the trivial weighted complete bipartite graph $G_{v_9u_7}$:

\begin{longtable}{M{4cm} M{2cm} M{4cm}}
   \begin{tikzpicture}[scale=1]
		\tikzstyle{rn}=[circle,fill=white,draw, inner sep=0pt, minimum size=5pt]
		\tikzstyle{every node}=[font=\footnotesize]

\node (1)[rn, label={[label distance=1](180:$v_{8}$}] at (-4 cm, 2 cm){};

\node (4)[rn, label={[label distance=1](0:$u_{6}$}] at (-2 cm, 2 cm){};

\draw (1) edge node {} (4);

	\end{tikzpicture}	
	& & 	\begin{tabular}{c|c}
	   & $u_6$  \\ 
	 \hline
	 $v_8$ &    \\
	\end{tabular}	
  \\ 	
 \end{longtable}

\noindent
The perfect matching is trivial but we still need the weight of the edge $v_8u_6$.
To get the weight of the edge $v_8u_6$ we get another trivial complete bipartite graph $G_{v_8u_6}$ with optimal perfect matching $M_{v_8u_6}=\{ (v_7,u_5) \}$:

\begin{longtable}{M{4cm} M{2cm} M{4cm}}
   \begin{tikzpicture}[scale=1]
		\tikzstyle{rn}=[circle,fill=white,draw, inner sep=0pt, minimum size=5pt]
		\tikzstyle{every node}=[font=\footnotesize]

\node (1)[rn, label={[label distance=1](180:$v_{7}$}] at (-4 cm, 2 cm){};

\node (4)[rn, label={[label distance=1](0:$u_{5}$}] at (-2 cm, 2 cm){};

\draw[line width=1.5pt] (1) edge node {} (4);

	\end{tikzpicture}	
	& & 	\begin{tabular}{c|c}
	   & $u_5$  \\ 
	 \hline
	 $v_7$ & {\large \textcircled{\small 0}}   \\
	\end{tabular}	
  \\ 	
 \end{longtable}
 
Since the largest weight of the matching $M_{v_8u_6}$ is equal $0$ also the largest weight of the previous trivial matching $M_{v_9u_7}=\{ (v_8,u_6) \}$ is equal to $0$. Therefore, the weight of the edge $v_9u_7$ is equal to $0$ and now we have all the weights to find the perfect matching of the complete bipartite graph $G_{v_{11}u_8}$:

\begin{longtable}{M{4cm} M{2cm} M{4cm}}
   \begin{tikzpicture}[scale=1]
		\tikzstyle{rn}=[circle,fill=white,draw, inner sep=0pt, minimum size=5pt]
		\tikzstyle{every node}=[font=\footnotesize]

\node (1)[rn, label={[label distance=1](180:$v_{6}$}] at (-4 cm, 2 cm){};
\node (2)[rn, label={[label distance=1](180:$v_{9}$}] at (-4 cm, 1 cm){};
\node (3)[rn, label={[label distance=1](180:$v_{10}$}] at (-4 cm, 0 cm){};

\node (4)[rn, label={[label distance=1](0:$u_{4}$}] at (-2 cm, 2 cm){};
\node (5)[rn, label={[label distance=1](0:$u_{7}$}] at (-2 cm, 1 cm){};
\node (6)[rn,fill=black!20, label={[label distance=1](0:$d_{1}$}] at (-2 cm, 0 cm){};

\draw[line width=1.5pt] (1) edge node {} (4);
\path (1) edge node {} (5);
\path (1) edge node {} (6);
\path (2) edge node {} (4);
\draw[line width=1.5pt] (2) edge node {} (5);
\path (2) edge node {} (6);
\path (3) edge node {} (4);
\path (3) edge node {} (5);
\draw[line width=1.5pt] (3) edge node {} (6);
		
	\end{tikzpicture}	
	& & 	\begin{tabular}{c|c|c|c}
	   & $u_4$ & $u_7$ & $d_1$ \\ 
	 \hline
	 $v_6$ & {\large \textcircled{\small 1}} & 2 & 3 \\
	 \hline
	 $v_9$ & 1  & {\large \textcircled{\small 0}} & 3  \\
	 \hline
	 $v_{10}$ & 1  & 2 & {\large \textcircled{\small 1}} \\
	\end{tabular}
	
  \\ 
	
 \end{longtable}
 
After finding the optimal perfect matching $M_{v_{11}u_8}=\{ (v_6,u_4),(v_9,u_7), (v_{10},d_1) \}$ we get the optimal top-down common subtree of the input rooted trees $T_1$ (rooted at $v_{11}$) and $T_2$ (rooted at $u_8$).
The largest weight of the optimal perfect matching $M_{v_{11}u_8}$ is equal to 1 so the distance between the covers of the corresponding amalgam is equal to 1.

\end{example}

The procedure 
\ReconstructionOfMapping is used to construct an actual optimal top-down common subtree isomorphism mapping $M$ of the input rooted trees. The construction is based on the Lemma \ref{lema_rekonstrukcija}. First, recall some properties of optimal perfect matchings.

At a fixed step during the procedure 
\OptimalTopDownCommonSubtree we are in the vertices $v \in T_1$ and $u \in T_2$. Let $S_1=(V(S_1),E(S_1))$ be the subtree of $T_1$ rooted at $v$ and $S_2=(V(S_2),E(S_2))$ the subtree of $T_2$ rooted at $u$. The solution to an optimal perfect matching $M_{vu}$ of the complete bipartite graph $G_{vu}$ is a set of weighted edges. Notice, the endpoints of those edges are from the vertex sets $V(S_1)$, $V(S_2)$ or dummy vertices $D$. If we remove from set $M_{vu}$ all the edges with a dummy vertex as an endpoint, then we get a set of ordered pairs of vertices $M_{vu}'\subseteq V(S_1) \times V(S_2)$. Since $V(S_1) \subseteq V(T_1)$ and $V(S_2) \subseteq V(T_2)$ it follows that $M_{vu}' \subseteq V(T_1) \times V(T_2)$.

\begin{lemma}\label{lema_rekonstrukcija}
\begin{sloppypar}
Let $T_1=(V(T_1),E(T_1))$ and $T_2=(V(T_2),E(T_2))$ be input rooted trees for the procedure 
\OptimalTopDownCommonSubtree and let $M' \subseteq V(T_1) \times V(T_2)$ be the union set of solutions to all optimal perfect matching problems solved during the procedure without the edges incident with dummy vertices. There is a unique optimal top-down common subtree isomorphism $M \subseteq V(T_1) \times V(T_2)$ such that $M \subseteq M'$.
\end{sloppypar}
\end{lemma}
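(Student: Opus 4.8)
The plan is to argue by strong induction along the recursion tree of the procedure \OptimalTopDownCommonSubtree, i.e.\ by induction on $\min\{height[v],height[u]\}$ over the pairs $(v,u)$ on which the procedure is invoked. For a fixed called pair $(v,u)$ I will prove the corresponding statement for the subtree of $T_1$ rooted at $v$ and the subtree of $T_2$ rooted at $u$: there is a unique optimal top-down common subtree isomorphism sending $v$ to $u$ that is contained in $M'\cup\{(v,u)\}$, and it is the one produced by the greedy descent described below. In the base case $\min\{height[v],height[u]\}=0$ one of $v,u$ is a leaf; then the only top-down common subtree isomorphism sending $v$ to $u$ is $\{(v,u)\}$, the distance between the covers of its amalgam is $\max\{height[v],height[u]\}$, which is exactly the value the procedure returns and hence optimal, and uniqueness is trivial. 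The lemma is the case $(v,u)=(r_1,r_2)$, with the understanding that the pair $(r_1,r_2)$ is never an edge of any $G_{xy}$, so it is added directly by \ReconstructionOfMapping and the inclusion $M\subseteq M'$ is to be read as $M\setminus\{(r_1,r_2)\}\subseteq M'$.

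The technical backbone is a \emph{locality} observation exploiting that $T_1$ and $T_2$ are trees. Every non-root vertex $w\in V(T_1)$ has a unique parent, so $w$ occurs in the left vertex class of exactly one family of complete bipartite graphs, the graphs $G_{parent[w],\,y}$; symmetrically for $T_2$. Consequently, if an ordered pair $(w_1,w_2)\in M'$ with $w_1\in children[x]$ and $w_2\in children[y]$, then $(w_1,w_2)$ was selected by the optimal perfect matching $M_{xy}$ of the \emph{single} graph $G_{xy}$ and the call on $(x,y)$ was actually performed. Two consequences will be used: (i) once a pair $(x,y)$ is known to belong to the reconstructed isomorphism, each child of $x$ has at most one partner among the children of $y$ compatible with $M'$, namely its $M_{xy}$-partner whenever that is not a dummy; and (ii) the pairs of $M'$ relevant to the subtrees rooted at $x$ and $y$ are precisely those produced by the recursive calls below $(x,y)$, so the induction hypothesis applies verbatim to $(x,y)$.

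For existence and optimality I would build $M$ by the greedy descent implicit in \ReconstructionOfMapping: declare $(r_1,r_2)\in M$, and whenever $(v,u)\in M$, for each child $v_i$ of $v$ admitting a child $u_j$ of $u$ with $(v_i,u_j)\in M'$, add $(v_i,u_j)$ and recurse into $(v_i,u_j)$. By consequence (i) the rule is unambiguous, and since $T_2$ is a tree no vertex of $T_2$ is reused, so the two projections of $M$ are top-down subtrees $S_1\subseteq T_1$ and $S_2\subseteq T_2$, $M$ is a bijection between them, and parent-consistency holds by construction; thus $M$ is a top-down common subtree isomorphism in the sense of Definition \ref{topDownCommonSubtree} and $M\setminus\{(r_1,r_2)\}\subseteq M'$. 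To check optimality I would show, by the same induction, that for every called pair $(v,u)$ the distance between the covers of the amalgam induced by $M$ restricted to the subtrees rooted at $v$ and $u$ equals the value the procedure returns on $(v,u)$: the children of $v$ that $M$ pairs are exactly those matched to non-dummy vertices in $M_{vu}$; each such pair $(v_i,u_j)$ contributes, by the induction hypothesis, exactly the weight $w(v_i,u_j)$ of the corresponding edge of $G_{vu}$; each child of $v$ matched to a dummy leaves a dangling subtree whose farthest vertex lies at distance $height[v_i]+1$, once more the weight of its edge; and the cover-distance of the amalgam is the maximum of these values, i.e.\ the largest weight of the optimal perfect matching $M_{vu}$, which by Lemma \ref{lema_procedure_HD_trees} is the optimal distance. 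Hence $M$ is optimal.

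Uniqueness is the step where I expect the real difficulty. First, $M$ \emph{contains} every top-down common subtree isomorphism contained in $M'\cup\{(r_1,r_2)\}$: such an isomorphism is parent-consistent and contains $(r_1,r_2)$, so a short downward induction — each of its level-one pairs must be some $(v_i,\mu(v_i))$ by consequence (i), hence lies in $M$, and likewise at every deeper level — forces all its pairs into $M$. It therefore remains to show that no \emph{proper} top-down common subtree isomorphism $\tilde M\subsetneq M$ is optimal. Suppose $\tilde M$ is optimal and, at some pair $(v,u)\in\tilde M$, omits the $M$-pair $(v_i,u_j)$ with $v_i$ a child of $v$ matched non-dummy in $M_{vu}$. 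Then the whole subtree rooted at $v_i$ lies outside $\tilde S_1$, and since by consequence (i) the only vertex of $T_1$ that can be matched to $u_j$ compatibly with $M'$ is $v_i$, the whole subtree rooted at $u_j$ lies outside $\tilde S_2$; in the amalgam of $\tilde M$ both subtrees dangle from the identified vertex $v=u$, which is in the intersection, so the distance between the covers is at least $\max\{height[v_i],height[u_j]\}+1$. The delicate point is to turn the global optimality of the perfect matching $M_{vu}$ into the per-pair conclusion that this quantity strictly exceeds the value returned at $(v,u)$ — and hence that $\tilde M$ has strictly larger cover-distance than $M$, a contradiction. For this one would combine the optimality of $M_{vu}$ (which could otherwise have broken the edge $v_iu_j$, matching $v_i$ to a dummy) with the elementary bound $w(v_i,u_j)\le\max\{height[v_i],height[u_j]\}$, obtained from the one-vertex common subtree $\{v_i\}\cong\{u_j\}$. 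Carrying this comparison through — ruling out that a matched subtree can be dropped without losing optimality — is the part I anticipate requiring the most care, and where the precise structure of the computed optimal perfect matchings must be used; once it is established, applying the induction hypothesis to each surviving pair $(v_i,\mu(v_i))$, which is legitimate by consequence (ii), completes the argument and shows $\tilde M=M$.
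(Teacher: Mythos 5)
The combinatorial core of your argument --- your ``consequence (i)'': once the image of $parent[v_i]$ is fixed, the pair $(v_i,\cdot)$ can only have been produced by the single bipartite graph $G_{parent[v_i],\,z}$, and a matching cannot pair $v_i$ with two siblings of $T_2$ --- is exactly the paper's entire proof. The author shows that for each $v$ with $(parent(v),z)\in M'$ there is at most one $(v,w)\in M'$ with $parent(w)=z$, concludes that the top-down reconstruction in order of non-decreasing depth is deterministic, and stops. Up to that point your proposal is correct and follows the same route; your additional induction showing that the reconstructed $M$ realizes the values returned by the procedure (hence is optimal via Lemma \ref{lema_procedure_HD_trees}) is a sensible supplement that the paper leaves implicit.

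The gap is exactly where you flagged it, and it is not merely a matter of care: the step cannot be completed as described, because the claim it targets is too strong. You want to rule out a proper parent-closed $\tilde M\subsetneq M$ being optimal by noting that omitting a matched pair $(v_i,u_j)$ raises the contribution of that branch from $w(v_i,u_j)\le\max\{height[v_i],height[u_j]\}$ to $\max\{height[v_i],height[u_j]\}+1$. But the distance between the covers is a \emph{maximum} over all branches: if some other child $v_k$ of $v$ is matched to a dummy in $M_{vu}$ with weight $height[v_k]+1\ge\max\{height[v_i],height[u_j]\}+1$, then truncating at $(v_i,u_j)$ leaves the overall maximum unchanged and $\tilde M$ is equally optimal while being a proper subset of $M$ contained in $M'$. (A three-children example where one subtree must be sacrificed to a dummy already produces this.) So ``there is a unique optimal top-down common subtree isomorphism contained in $M'$'', read literally, does not follow from --- and is in fact contradicted by --- the per-branch comparison you propose. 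The paper avoids the issue entirely by proving only that the greedy reconstruction of the maximal parent-consistent subset of $M'$ is unambiguous, which is what the sibling/matching argument delivers and all that the algorithm actually needs; if you want to keep your stronger formulation you must either redefine ``unique'' to mean ``the unique output of the reconstruction'' or add a maximality condition on $M$.
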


\begin{proof}
\begin{sloppypar}
Let $T_1=(V(T_1),E(T_1))$ and $T_2=(V(T_2),E(T_2))$ be the input rooted trees for the procedure 
\OptimalTopDownCommonSubtree and let $M'$ be the corresponding union set of solutions to optimal perfect matching problems without the edges incident with dummy vertices.
If we show that for each vertex $v \in V(T_1)$ with $(parent(v),z) \in M'$, for some vertex $z \in V(T_2)$, there is at most one pair $(v,w) \in M'$ such that $parent(w)=z$, then we can reconstruct the unique optimal top-down common subtree isomorphism $M \subseteq M'$ of $T_1$ and $T_2$ in the order of non-decreasing depth of the vertices in the tree $T_1$.
\end{sloppypar}

Let $(v,w_1),(v,w_2) \in M'$ with $w_1 \neq w_2$.
Suppose that vertices $w_1$ and $w_2$ are siblings. Both of them appear in the bipartite graph $G_{pz}$ in the same partition set, where $p=parent(v)$.
Two edges in a (bipartite) matching cannot share a common vertex. Only one pair, either $(v,w_1)$ or $(v,w_2)$, can be part of an optimal perfect matching for $G_{pz}$, a contradiction. 
It follows that vertices $w_1$ and $w_2$ are not siblings. Therefore, $parent(w_1) \neq parent(w_2)$.
\end{proof}

We will reconstruct an optimal top-down common subtree isomorphism mapping $M \subseteq V(T_1) \times V(T_2)$ from the set $M' \subseteq V(T_1) \times V(T_2)$ as follows. Begin with $M=\{(r_1,r_2)\}$ and for all the remaining vertices $v \in V(T_1)$ in preorder traversal\footnote{In this case, preorder traversal means that we start in the root vertex and, the parent vertices have to be visited before their child vertices. The visiting order of children of a vertex is not important.} of the tree $T_1$, add the pair $(v,w)$ to the set $M$ if it holds that $(v,w) \in M'$ and $(parent(v),parent(w)) \in M$.
\vspace{0.5cm}

\begin{procedure}[H]
\SetKwData{RR}{$r_2$}
\SetKwInOut{Input}{input}\SetKwInOut{Output}{output}
\Input{Rooted tree \T and its root vertex \R, root vertex \RR of \TT, the union set of solutions to the optimal perfect matching problems \Mcrta and mapping \setM.}
\Output{Optimal top-down common subtree isomorphism mapping \setM from the subtree of \T rooted at \R to subtree of \TT rooted at \RR reconstructed from the union set of solutions to all optimal perfect matchings saved in \Mcrta.}

\BlankLine
\setM $\leftarrow \setM \cup (\R,\RR)$ \\
Let $P(\T)=\left( v_1, \ldots, v_n \right)$ be the preorder set of the vertex set $V(\T)$  \\
\For{$i \gets 1$ to $n$}{
	\ForEach{$(v_i,w) \in \Mcrta$}{
		\If{$((parent(v_i),parent(w)) \in \setM$}{
			$\setM \leftarrow \setM \cup (v_i,w)$			
		}
	}
}

\caption{ReconstructionOfMapping()(\T,\R,\RR,\Mcrta,\setM)}\label{procedure_reconstruction}
\end{procedure}

\vspace{0.5cm}
In Example \ref{primerRekonstrukcija} we continue Example \ref{primerMatchings} with the reconstruction of an optimal top-down common subtree isomorphism mapping $M$.

\begin{example}\label{primerRekonstrukcija}
The solutions to optimal perfect matching problems solved during the procedure are listed below.

\begin{equation*}
\begin{split}
M_{v_6u_4}&=\{ (v_2,u_3),(v_5,u_2),(d_2,u_1) \} \\
M_{v_2u_6}&=\{ (v_1,u_5) \} \\
M_{v_5u_6}&=\{ (v_3,u_5),(v_4,d_4) \} \\
M_{v_6u_7}&=\{ (v_2,u_6),(v_5,d_3) \} \\
M_{v_9u_4}&=\{ (v_8,u_1),(d_5,u_2), (d_6,u_3) \} \\
M_{v_8u_6}&=\{ (v_7,u_5) \} \\
M_{v_9u_7}&=\{ (v_8,u_6) \} \\
M_{v_{11}u_8}&=\{ (v_6,u_4),(v_9,u_7), (v_{10},d_1) \} 
\end{split}
\end{equation*}

\noindent
The set $M' \subseteq V(T_1) \times V(T_2)$ equals to the union of the above sets without the edges incident with dummy vertices. Therefore, 
\begin{equation*}
\begin{split}
	 M'= & \{ (v_1,u_5), \\ 
	  	& (v_2,u_3), (v_2,u_6),\\
	  & (v_3,u_5), \\
	  & (v_5,u_2),\\
	  & (v_6,u_4), \\
	  & (v_7,u_5), \\
	  & (v_8,u_1), (v_8,u_6), \\
	  & (v_9,u_7) \}.
\end{split}
\end{equation*}
We start with the mapping set $M=\{ (v_{11},u_8) \}$.
Following the preorder traversal of $T_1$ rooted at $v_{11}$ we add $(v_6,u_4), (v_2,u_3), (v_5,u_2), (v_9,u_7), (v_8,u_6)$ and $(v_7,u_5) $ to the set $M$.
\end{example}

Finally, we have the following Theorem.

\begin{theorem}
The Algorithm \ref{algorithm_HD_trees} determines the Hausdorff distance between the input trees and finds the corresponding common subtree isomorphism $M$.
\end{theorem}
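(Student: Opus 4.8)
The plan is to prove correctness in two parts: first that \texttt{Algorithm~\ref{algorithm_HD_trees}} outputs the correct numerical value of $\Hd{T_1}{T_2}$ in the variable \texttt{hd}, and second that the set $M$ returned by \ReconstructionOfMapping is a valid optimal top-down common subtree isomorphism realizing that value. For the value, I would argue that every amalgam of $T_1$ and $T_2$ arises from a common subtree, and that by rooting $T_1$ at a central vertex $r_1$ (legitimate by Theorem~\ref{centerVecjeJeNot}, which guarantees some central vertex of the larger-diameter tree lies in the intersection of \emph{every} optimal amalgam) and letting $r_2$ range over all of $V(T_2)$, the algorithm examines, for at least one choice of $(r_1,r_2)$, a rooted pair whose optimal top-down amalgam is a genuine optimal (unrooted) amalgam. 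This is exactly the point made in the discussion preceding the algorithm, so the theorem reduces to: for fixed roots $r_1,r_2$, the call \OptimalTopDownCommonSubtree$(T_1,r_1,T_2,r_2,M')$ returns $h_A(T_1^A,T_2^A)$ for an optimal top-down amalgam $A$.

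I would establish that last claim by induction on $\min(height[r_1],height[r_2])$, i.e.\ on the depth of the recursion. The base case is when one root is a leaf: mapping the leaf to the other root gives the only top-down common subtree, and the distance between the covers equals the maximal distance of any vertex of the other subtree from its root, which is precisely $\max(height[r_1],height[r_2])$ as the procedure returns. For the inductive step, with $v=r_1$, $u=r_2$ non-leaves, the children decompose the subtree rooted at $v$ (resp.\ $u$) into subtrees rooted at $children[v]$ (resp.\ $children[u]$), and any top-down common subtree of the two big subtrees restricts to top-down common subtrees on pairs of these smaller subtrees, with each child-subtree of $v$ matched to at most one child-subtree of $u$ and vice versa --- exactly a matching in $G_{vu}$; the dummy vertices account for unmatched child-subtrees of $v$, whose weight $height[v_i]+1$ correctly records that $v_i$'s whole subtree lies outside the intersection. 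By the inductive hypothesis the edge weights of $G_{vu}$ are the correct covers-distances of the child sub-amalgams, and then Lemma~\ref{lema_procedure_HD_trees} (whose proof is already given) shows the covers-distance of the optimal amalgam at $(v,u)$ is the largest weight of an optimal perfect matching $M_{vu}$, which is what \SolveOptimalPerfectMatching computes. Applying this at $(v,u)=(r_1,r_2)$ finishes the value part.

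For the reconstruction part, I would invoke Lemma~\ref{lema_rekonstrukcija}: the union $M'$ of all optimal-perfect-matching solutions (with dummy edges deleted) contains a \emph{unique} optimal top-down common subtree isomorphism $M$, and it can be extracted greedily in order of non-decreasing depth in $T_1$ by keeping a pair $(v,w)\in M'$ exactly when $(parent(v),parent(w))$ has already been kept. I would check that \ReconstructionOfMapping does precisely this: it initializes $M=\{(r_1,r_2)\}$, iterates the vertices of $T_1$ in preorder (which refines non-decreasing depth, so parents precede children), and adds $(v_i,w)$ whenever $(v_i,w)\in M'$ and $(parent(v_i),parent(w))\in M$ --- matching the procedure described just before the pseudocode. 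Hence the returned $M$ is the isomorphism of an optimal top-down common subtree whose amalgam has covers-distance equal to \texttt{hd}$=\Hd{T_1}{T_2}$.

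I expect the main obstacle to be the inductive-step bookkeeping that the weight assigned to a dummy edge $(v_i,d_k)$, namely $height[v_i]+1$, is the right quantity in \emph{all} cases, including when $v_i$ itself is a leaf (so its subtree is the single vertex $v_i$, at distance $1$ from $v$, and indeed $height[v_i]+1=1$), and conversely that leaving a $u_j$ matched only to a dummy never helps --- i.e.\ that an optimal top-down common subtree that maps no child of $v$ to $u_j$ is simulated by a matching of $G_{vu}$ of no larger maximum weight. This amounts to verifying the converse direction already sketched in the proof of Lemma~\ref{lema_procedure_HD_trees} (the construction of $G_{vu}'$ from a hypothetical better common subtree), together with the observation that, since $T_1$ is rooted at a central vertex and every vertex of an optimal top-down amalgam has bounded distance to the intersection, discarding a subtree of $T_2$ rooted at an unmatched $u_j$ only ever increases the covers-distance of that branch. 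Making the dummy-vertex argument airtight, and confirming that the $O(|V(T_1)|)$ choices of $r_2$ together with the polynomial cost of Hopcroft--Karp at each recursion node keep the total running time polynomial, are the only places requiring care.
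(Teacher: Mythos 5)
Your proposal is correct and follows essentially the same route as the paper: root $T_1$ at a central vertex (justified by Theorem \ref{centerVecjeJeNot}), try every root for $T_2$, and reduce correctness of the two procedures to Lemma \ref{lema_procedure_HD_trees} and Lemma \ref{lema_rekonstrukcija}. The paper's proof is only a few lines citing those lemmas, whereas you additionally spell out the induction on the recursion and the dummy-vertex bookkeeping that the paper leaves implicit; this is a more detailed version of the same argument, not a different one.
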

\begin{proof}
\begin{sloppypar}
In the optimal top-down amalgam root vertices are always in the intersection of the amalgam. Therefore, we can root $T_1$ in a central vertex due to the Theorem \ref{centerVecjeJeNot}. For the root of $T_2$ we choose each vertex of the vertex set of $T_2$, making sure that one of the optimal top-down amalgams will coincide with an optimal amalgam of the input trees.
The correctness of the Procedure 
\OptimalTopDownCommonSubtree follows from Lemma \ref{lema_procedure_HD_trees} and the correctness of the Procedure 
\ReconstructionOfMapping follows from Lemma \ref{lema_rekonstrukcija}.
\end{sloppypar}
\end{proof}


In order to bound the time complexity of Algorithm \ref{algorithm_HD_trees} we need the time complexities of the procedures and sub-procedures.

\begin{lemma}\label{lema:time_complexity_1}
\begin{sloppypar}
Let $T_1=(V(T_1),E(T_1))$ and $T_2=(V(T_2),E(T_2))$ be rooted input trees of the procedure 
\OptimalTopDownCommonSubtree and let $G_{vu}$ be the complete bipartite graph on $2p$ vertices considered during the procedure. The sub-procedure of finding an optimal perfect matching of graph $G_{vu}$ runs in $\mathcal{O}  \left( \log{ \left(\lvert V(T_1) \rvert \right)} \cdot p^{\frac{5}{2}} \right)$.
\end{sloppypar}
\end{lemma}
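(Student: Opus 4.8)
The plan is to analyze the cost of the sub-procedure \texttt{SolveOptimalPerfectMatching} when applied to the complete bipartite graph $G_{vu}$ on $2p$ vertices, where $p = |children[v]| \geq |children[u]| = q$. The sub-procedure consists of three phases: (1) sorting the distinct edge weights, (2) a binary search over these sorted weights, and (3) at each step of the binary search, running the Hopcroft--Karp algorithm on a spanning subgraph of $G_{vu}$. First I would bound the number of distinct edge weights: since every weight is a distance between covers of some top-down amalgam of subtrees of $T_1$, Theorem~\ref{zgornjaMejaRadij} (together with the fact that such a distance is bounded by the radius, hence by the height, of the relevant subtree of $T_1$) shows every weight is an integer in the range $\{0, 1, \ldots, |V(T_1)|\}$ (the $+1$ contributions for dummy edges are also absorbed in this bound). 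Hence there are at most $|V(T_1)|+1 = \mathcal{O}(|V(T_1)|)$ distinct weights, the sorted array has length $\mathcal{O}(|V(T_1)|)$, and the binary search performs $\mathcal{O}(\log |V(T_1)|)$ iterations.

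Next I would bound the cost of a single iteration. The graph $G_{vu}$ is complete bipartite on parts of size $p$ and $p$, so it has $|V(G_{vu})| = 2p$ vertices and $|E(G_{vu})| = p^2$ edges; any spanning subgraph $G_{vu}'$ has at most $p^2$ edges. By the cited bound for Hopcroft--Karp \cite{hopcroft-karp}, finding a maximum bipartite matching in $G_{vu}'$ takes $\mathcal{O}\!\left(\sqrt{|V(G_{vu}')|}\cdot |E(G_{vu}')|\right) = \mathcal{O}\!\left(\sqrt{2p}\cdot p^2\right) = \mathcal{O}\!\left(p^{5/2}\right)$ time. Multiplying the per-iteration cost by the number $\mathcal{O}(\log |V(T_1)|)$ of binary-search iterations gives the claimed bound $\mathcal{O}\!\left(\log(|V(T_1)|)\cdot p^{5/2}\right)$ for the whole sub-procedure.

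Finally I would check that the remaining bookkeeping does not dominate: building $G_{vu}'$ at each iteration costs $\mathcal{O}(p^2)$, which is absorbed into $\mathcal{O}(p^{5/2})$; sorting the $\mathcal{O}(p^2)$ edges by the $\mathcal{O}(|V(T_1)|)$ possible weight values can be done in $\mathcal{O}(p^2 + |V(T_1)|)$ by counting sort, or in $\mathcal{O}(p^2 \log p)$ by comparison sorting, either of which is absorbed by the $\mathcal{O}(\log(|V(T_1)|)\cdot p^{5/2})$ term since $p \leq |V(T_1)|$. Thus the total is as stated.

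The main obstacle I anticipate is the first step: justifying cleanly that the number of distinct edge weights is $\mathcal{O}(|V(T_1)|)$ rather than something larger. This requires observing that every weight assigned in the procedure is either a recursively computed distance between covers of a top-down amalgam of subtrees of the \emph{original} trees, or a quantity of the form $height[\cdot]+1$, and in all cases it is bounded above by (roughly) the radius — hence the number of vertices — of the relevant subtree of $T_1$, invoking Theorem~\ref{zgornjaMejaRadij} and the inequality $\rad(G) \leq \diam(G)$ together with the hypothesis $\diam(T_1) \geq \diam(T_2)$. Once this integrality-and-boundedness claim is in place, the rest is a routine composition of the sorting bound, the logarithmic binary-search depth, and the Hopcroft--Karp running time on a graph with $2p$ vertices and at most $p^2$ edges.
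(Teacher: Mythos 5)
Your proposal is correct and follows essentially the same argument as the paper: bound the number of distinct edge weights by $\mathcal{O}(\lvert V(T_1)\rvert)$ (since weights come from vertex heights), so the binary search takes $\mathcal{O}(\log \lvert V(T_1)\rvert)$ iterations, each costing $\mathcal{O}(\sqrt{2p}\cdot p^2)=\mathcal{O}(p^{5/2})$ via Hopcroft--Karp, with sorting absorbed. Your extra care in justifying the weight bound (via heights and $\diam(T_1)\geq\diam(T_2)$) and in checking that the sorting cost is dominated only makes the argument slightly more explicit than the paper's.
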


\begin{proof}
Graph $G_{vu}$ has $p^2$ edges. First we sort all the edges in $\mathcal{O}(p^2 \cdot \log{(p^2)})$ time. 
Then we do a binary search on the array of all different edge weights.
There are at most $\lvert V(T_1) \rvert$ different edge weights in the graph $G_{vu}$ since the edge weights in graph $G_{vu}$ are made from heights of the vertices of the input trees. Therefore, the binary search takes at most $\log{ \left(\lvert V(T_1) \rvert \right)}$ iterations.
At each iteration we run the Hopcroft-Karp algorithm for maximum bipartite matching. Hopcroft-Karp algorithm runs in $\mathcal{O}(\sqrt{|V(G)|}|E(G)|)$ time \cite{hopcroft-karp}.
This gives us the $\mathcal{O} \left(\log{ \left(\lvert V(T_1) \rvert \right)} \cdot p^{\frac{5}{2}} \right)$ overall time complexity.
\end{proof}

\begin{lemma}\label{lema:time_complexity_2}
\begin{sloppypar}
Let $T_1=(V(T_1),E(T_1))$ and $T_2=(V(T_2),E(T_2))$ be rooted input trees of the procedure 
\OptimalTopDownCommonSubtree .
The time complexity of the procedure 
\OptimalTopDownCommonSubtree is bounded by $\mathcal{O} \left( \log{ \left(\lvert V(T_1) \rvert \right)} \cdot \lvert V(T_1) \rvert  \cdot \left\lvert V(T_2) \right\rvert \cdot \left( \lvert V(T_1) \rvert ^{\frac{3}{2}} + \lvert V(T_2)  \rvert^{\frac{3}{2}} \right) \right)$.
\end{sloppypar}
\end{lemma}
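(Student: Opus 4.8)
The plan is to charge the total running time to the nodes of the recursion tree of \OptimalTopDownCommonSubtree, counting at each node only the work done there and \emph{not} its recursive calls (each recursive call is charged to the node it creates). First I would bound the number of nodes: a recursive invocation with vertex arguments $v'\in V(T_1)$ and $u'\in V(T_2)$ is issued only from the loop over the edges of the graph $G_{vu}$ with $v=parent(v')$ and $u=parent(u')$; since every non-root vertex of a rooted tree has a unique parent and the top-level invocation is on the pair of roots, the procedure is entered at most once for each pair $(v,u)\in V(T_1)\times V(T_2)$. The invocations that meet the stopping condition (one argument being a leaf) cost $\mathcal{O}(1)$ each, because the heights have been precomputed, so they contribute only $\mathcal{O}(|V(T_1)|\cdot|V(T_2)|)$ in total, which is dominated by the asserted bound.

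Next I would bound the non-recursive work at a node with arguments $v,u$ that do not meet the stopping condition. Put $p=p_{vu}:=\max\{|children[v]|,|children[u]|\}$, so $G_{vu}$ has $2p$ vertices and $p^2$ edges. Building $G_{vu}$ and inserting the weights that come from the leaf and dummy cases costs $\mathcal{O}(p^2)$ (the remaining edge weights are outputs of recursive calls, charged elsewhere), and the $\mathcal{O}(1)$-per-edge overhead of dispatching those calls is subsumed in this; by Lemma \ref{lema:time_complexity_1} the call to \SolveOptimalPerfectMatching\ runs in $\mathcal{O}(\log(|V(T_1)|)\cdot p^{5/2})$; and removing the dummy-incident edges and updating $M'$ takes $\mathcal{O}(p^2)$. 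Since $\log(|V(T_1)|)\ge 1$ and $p^2\le p^{5/2}$, the work at such a node is $\mathcal{O}(\log(|V(T_1)|)\cdot p^{5/2})$.

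Finally I would sum the per-node costs. As there are at most $|V(T_1)|\cdot|V(T_2)|$ such nodes and the factor $\log(|V(T_1)|)$ is common to all, it suffices to bound $\sum p_{vu}^{5/2}$, and estimating it over all pairs $(v,u)\in V(T_1)\times V(T_2)$ only increases the bound. From $p_{vu}\le|children[v]|+|children[u]|$ and convexity of $t\mapsto t^{5/2}$ one gets $p_{vu}^{5/2}=\mathcal{O}\bigl(|children[v]|^{5/2}+|children[u]|^{5/2}\bigr)$, so
\[
\sum_{(v,u)\in V(T_1)\times V(T_2)}p_{vu}^{5/2}=\mathcal{O}\!\left(|V(T_2)|\sum_{v\in V(T_1)}|children[v]|^{5/2}+|V(T_1)|\sum_{u\in V(T_2)}|children[u]|^{5/2}\right).
\]
Bounding $|children[v]|^{5/2}\le|V(T_1)|^{3/2}\cdot|children[v]|$ and using that each non-root vertex is a child of exactly one vertex, i.e.\ $\sum_{v\in V(T_1)}|children[v]|=|V(T_1)|-1$, the first inner sum is $\mathcal{O}(|V(T_1)|^{5/2})$, and symmetrically the second is $\mathcal{O}(|V(T_2)|^{5/2})$. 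Multiplying by the $\log(|V(T_1)|)$ factor gives
\[
\mathcal{O}\!\left(\log(|V(T_1)|)\cdot|V(T_1)|\cdot|V(T_2)|\cdot\bigl(|V(T_1)|^{3/2}+|V(T_2)|^{3/2}\bigr)\right),
\]
which is the claimed bound.

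I expect the main obstacle to be the first step: making precise that, although the procedure is not memoised, each vertex pair $(v,u)$ is still processed at most once, so that the recursion tree has only $\mathcal{O}(|V(T_1)|\cdot|V(T_2)|)$ nodes; this is what turns an apparently branching recursion into a sum over $V(T_1)\times V(T_2)$. The other slightly delicate point is the counting identity $\sum_v|children[v]|=|V(T_1)|-1$ in the last step, which collapses $\sum_v|children[v]|^{5/2}$ to $\mathcal{O}(|V(T_1)|^{5/2})$; the per-node cost estimate and the convexity manipulation are routine.
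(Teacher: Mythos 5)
Your proof is correct and follows essentially the same route as the paper: charge each vertex pair $(v,u)$ its $\mathcal{O}(\log(|V(T_1)|)\cdot p^{5/2})$ matching cost, bound the max of the two children-counts by their sum, and collapse $\sum_v |children[v]|^{5/2}$ to $\mathcal{O}(|V(T_1)|^{5/2})$ (the paper does this via superadditivity of $t\mapsto t^{5/2}$ rather than your $|children[v]|^{5/2}\le |V(T_1)|^{3/2}\cdot|children[v]|$, but the two are interchangeable). Your explicit justification that each pair is entered at most once is a point the paper leaves implicit, and is a welcome clarification rather than a divergence.
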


\begin{proof}
If one of the root vertices is a leaf then the complexity of the procedure is constant. Therefore, the total effort spent on leaves is bounded by $\mathcal{O} \left( \lvert V(T_1) \rvert + \lvert V(T_2) \rvert \right)$.

\begin{sloppypar}
If both of the root vertices are non-leaves then the most (time) consuming part of the procedure is the sub-procedure \SolveOptimalPerfectMatching and is bounded by time complexity $\mathcal{O} \left(\log{ \left(\lvert V(T_1) \rvert \right)} \cdot p^{\frac{5}{2}} \right)$ due to the Lemma \ref{lema:time_complexity_1}, where
$p=\max{\{ \lvert children[v]\rvert,\lvert children[u] \rvert} \}$. Until the end of the proof we will denote $\lvert children[v] \rvert$ with $c(v)$. If we sum the time complexities of all the possible pairs of vertices such that one is from $V(T_1)$ and the other is from $V(T_2)$ then we get an upper bound for the time complexity.
Therefore, using the following equalities and inequalities
\end{sloppypar}
\begin{equation*}
\begin{split}
 & \sum_{v \in V(T_1), u \in V(T_2)} \max{ \left\{ \log{ \left(\lvert V(T_1) \rvert \right)} \cdot c(v) ^\frac{5}{2}, \log{ \left(\lvert V(T_1) \rvert \right)} \cdot c(u) ^\frac{5}{2} \right\}} \leq \\
 & \leq
\sum_{v \in V(T_1), u \in V(T_2)} \left( \log{ \left(\lvert V(T_1) \rvert \right)} \cdot c(v) ^\frac{5}{2} + \log{ \left(\lvert V(T_1) \rvert \right)} \cdot c(u) ^\frac{5}{2} \right) =\\
 &= \log{ \left(\lvert V(T_1) \rvert \right)} \cdot \sum_{v \in V(T_1)} \left(  \sum_{u \in V(T_2)} c(v) ^\frac{5}{2} +c(u) ^\frac{5}{2} \right)=\\ 
 &=\log{ \left(\lvert V(T_1) \rvert \right)} \cdot \sum_{v \in V(T_1)} \left( \left( \lvert V(T_2) \rvert \cdot c(v) ^\frac{5}{2} \right) + \left( c(u_1) ^\frac{5}{2} + \cdots + c(u_{\lvert V(T_2) \rvert}) ^\frac{5}{2} \right) \right) \leq \\
& \leq \log{ \left(\lvert V(T_1) \rvert \right)} \cdot \sum_{v \in V(T_1)} \left( \left( \lvert V(T_2) \rvert \cdot c(v) ^\frac{5}{2} \right) + \left( c(u_1) + \cdots + c(u_{\lvert V(T_2) \rvert}) \right)^\frac{5}{2} \right)  \leq \\
& \leq \log{ \left(\lvert V(T_1) \rvert \right)} \cdot \sum_{v \in V(T_1)} \left( \left( \lvert V(T_2) \rvert \cdot c(v) ^\frac{5}{2} \right) + \lvert V(T_2) \rvert ^\frac{5}{2} \right)  = \\
&=\log{ \left(\lvert V(T_1) \rvert \right)} \cdot \left( \left( \lvert V(T_1) \rvert \cdot \lvert V(T_2) \rvert ^\frac{5}{2} \right) + \left( \left( \lvert V(T_2) \rvert \cdot c(v_1) ^\frac{5}{2}  \right) + \cdots + \left( \lvert V(T_2) \rvert \cdot c(v_{\lvert V(T_1) \rvert}) ^\frac{5}{2} \right) \right) \right) = \\
&=\log{ \left(\lvert V(T_1) \rvert \right)} \cdot \left( \left( \lvert V(T_1) \rvert \cdot \lvert V(T_2) \rvert ^\frac{5}{2} \right) + \lvert V(T_2) \rvert \cdot \left( c(v_1) ^\frac{5}{2}  +  \cdots +  c(v_{\lvert V(T_1) \rvert}) ^\frac{5}{2} \right) \right) \leq \\
& \leq \log{ \left(\lvert V(T_1) \rvert \right)} \cdot \left( \left( \lvert V(T_1) \rvert \cdot \lvert V(T_2) \rvert ^\frac{5}{2} \right) + \lvert V(T_2) \rvert \cdot \left( c(v_1)  + \cdots +  c(v_{\lvert V(T_1) \rvert})  \right)^\frac{5}{2} \right) \leq \\
& \leq \log{ \left(\lvert V(T_1) \rvert \right)} \cdot \left( \left( \lvert V(T_1) \rvert \cdot \lvert V(T_2) \rvert ^\frac{5}{2} \right) + \left( \lvert V(T_2) \rvert \cdot \lvert V(T_1) \rvert ^\frac{5}{2} \right)  \right)
\end{split}
\end{equation*}

\begin{sloppypar}
we get that the total effort spent on non-leaves is bounded by $$ \mathcal{O} \left( \log{ \left(\lvert V(T_1) \rvert \right)} \cdot \lvert V(T_1) \rvert  \cdot \left\lvert V(T_2) \right\rvert \cdot \left( \lvert V(T_1) \rvert ^{\frac{3}{2}} + \lvert V(T_2)  \rvert^{\frac{3}{2}} \right) \right).$$
\end{sloppypar}
\end{proof}

\begin{theorem}
Let $T_1=(V(T_1),E(T_1))$ and $T_2=(V(T_2),E(T_2))$ be input trees of the Algorithm \ref{algorithm_HD_trees}, where $\diam{(T_1)} \geq \diam{(T_2)}$.
The time complexity of the Algorithm \ref{algorithm_HD_trees} is bounded by 
$$\mathcal{O} \left( \log{ \left(\lvert V(T_1) \rvert \right)} \cdot \lvert V(T_1) \rvert \cdot \left\lvert V(T_2) \right\rvert ^2 \cdot \left( \lvert V(T_1) \rvert ^{\frac{3}{2}} + \lvert V(T_2)  \rvert^{\frac{3}{2}} \right) \right).$$ 
\end{theorem}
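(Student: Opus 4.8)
The plan is to charge the running time of Algorithm \ref{algorithm_HD_trees} to three parts --- the preprocessing on lines~1--4, the main loop over $V(T_2)$, and the single concluding call to \ReconstructionOfMapping --- and then to verify that the loop dominates, so that its cost is exactly the asserted bound.

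First I would dispose of the preprocessing and of the cheap per-iteration overhead. Initializing the running minimum, choosing $r_1\in\cen(T_1)$, and computing all heights of $T_1$ rooted at $r_1$ amount to a constant number of linear-time tree traversals, hence $\mathcal{O}(|V(T_1)|)$; the requirement $\diam(T_1)\ge\diam(T_2)$ is part of the input specification and costs nothing. The loop then executes once for every $u\in V(T_2)$, that is $|V(T_2)|$ times. In one iteration, recomputing the heights of $T_2$ rooted at $u$ is $\mathcal{O}(|V(T_2)|)$, the invocation of the procedure \OptimalTopDownCommonSubtree with $T_1$ rooted at $r_1$ and $T_2$ rooted at $u$ costs $\mathcal{O}\big(\log(|V(T_1)|)\cdot|V(T_1)|\cdot|V(T_2)|\cdot(|V(T_1)|^{3/2}+|V(T_2)|^{3/2})\big)$ by Lemma \ref{lema:time_complexity_2}, and the remaining bookkeeping (the comparison and the assignments to $\mathtt{hd}$, $r_2$, $O$) is $\mathcal{O}(1)$ apart from possibly copying $M'$ into $O$, a cost of $\mathcal{O}(|M'|)$ that I account for below. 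Multiplying the per-iteration bound by $|V(T_2)|$ shows the loop runs in $\mathcal{O}\big(\log(|V(T_1)|)\cdot|V(T_1)|\cdot|V(T_2)|^2\cdot(|V(T_1)|^{3/2}+|V(T_2)|^{3/2})\big)$, which is precisely the claimed complexity and subsumes the $\mathcal{O}(|V(T_2)|^2)$ spent recomputing heights.

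The only place that needs a genuine argument --- the main obstacle --- is bounding \ReconstructionOfMapping together with the copies of $M'$. The key point is that $M'$ is a set of pairs inside $V(T_1)\times V(T_2)$, so $|M'|\le|V(T_1)|\cdot|V(T_2)|$; indeed, after deletion of the dummy edges a pair $(a,b)\in M'$ joins a child of some vertex to a child of some vertex, so it can only have originated from the single optimal perfect matching attached to the pair $(parent(a),parent(b))$, which rules out overcounting. Consequently each copy $O\leftarrow M'$ costs $\mathcal{O}(|V(T_1)|\cdot|V(T_2)|)$, so all copies over the loop cost $\mathcal{O}(|V(T_1)|\cdot|V(T_2)|^2)$. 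For \ReconstructionOfMapping itself, building adjacency lists of $M'$ is $\mathcal{O}(|M'|)$, after which the single preorder pass of $T_1$ scans, for each $v_i$, at most $|V(T_2)|$ pairs $(v_i,w)\in M'$ and performs the test $((parent(v_i),parent(w))\in M)$ in $\mathcal{O}(1)$ time using precomputed $parent[\cdot]$ and an array storing the unique $M$-partner of each vertex of $T_1$; this is $\mathcal{O}(|V(T_1)|\cdot|V(T_2)|)$. Both contributions are dominated by the loop term, so adding the three parts gives the asserted bound $\mathcal{O}\big(\log(|V(T_1)|)\cdot|V(T_1)|\cdot|V(T_2)|^2\cdot(|V(T_1)|^{3/2}+|V(T_2)|^{3/2})\big)$.
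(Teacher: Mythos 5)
Your proposal is correct and follows essentially the same route as the paper: invoke Lemma \ref{lema:time_complexity_2} for each of the $\lvert V(T_2)\rvert$ iterations of the main loop, observe that \ReconstructionOfMapping costs only $\mathcal{O}(\lvert V(T_1)\rvert\cdot\lvert V(T_2)\rvert)$, and conclude that the loop dominates. You are merely more explicit than the paper about the minor bookkeeping (height recomputation and copying $M'$), which the paper silently absorbs into the same bound.
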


\begin{proof}
Since the procedure 
\ReconstructionOfMapping runs in $\mathcal{O} \left( \lvert V(T_1) \rvert \cdot \lvert V(T_2) \rvert \right)$ it follows that the most expensive part of the Algorithm \ref{algorithm_HD_trees} is the $for$ loop which iterates through all the vertices of $V(T_2)$. At every iteration, the procedure 
\OptimalTopDownCommonSubtree is called. Therefore, the time complexity of the Algorithm \ref{algorithm_HD_trees} is bounded by $$\mathcal{O} \left(
\lvert V(T_2) \rvert \cdot \left( \log{ \left(\lvert V(T_1) \rvert \right)} \cdot \lvert V(T_1) \rvert \cdot \left\lvert V(T_2) \right\rvert \cdot \left( \lvert V(T_1) \rvert ^{\frac{3}{2}} + \lvert V(T_2)  \rvert^{\frac{3}{2}} \right) \right) \right).$$ 
\end{proof}


\end{document}